\documentclass[11pt]{amsart}

\usepackage[utf8]{inputenc}
\usepackage{paralist}
\usepackage[colorlinks=true,linkcolor=blue,urlcolor=red,citecolor=blue]{hyperref}
\usepackage{amsfonts,amsmath,amssymb,amsthm}
\usepackage{geometry}
\geometry{a4paper,margin=3.3cm}

\usepackage{listings,color,pgf}
\usepackage{tikz}
\usepackage{pgfplots}
\definecolor{colKeys}{rgb}{0,0,1} 
\definecolor{colIdentifier}{rgb}{0,0,0} 
\definecolor{colComments}{rgb}{0,1,0.3} 
\definecolor{colString}{rgb}{0,0.5,0} 

\definecolor{dkgreen}{rgb}{0,0.6,0} 
\definecolor{gray}{rgb}{0.5,0.5,0.5} 
\definecolor{lightgray}{rgb}{0.9,0.9,0.9} 

\lstset{language=Matlab, 
   keywords={break,case,catch,continue,else,elseif,end,for,function, 
   global,if,otherwise,persistent,return,switch,try,while,ones,zeros}, 
   float=hbp, 
   basicstyle=\ttfamily\footnotesize, 
   identifierstyle=\color{colIdentifier}, 
   keywordstyle=\color{blue}, 
   commentstyle=\color{dkgreen}, 
   stringstyle=\color{cyan}, 
   columns=fixed, 
   tabsize=2, 
   frame=single, 
   numbers=none, 
   extendedchars=false, 
   numberstyle=\ttfamily\tiny, 
   stepnumber=1, 
   numbersep=5pt, 
   showspaces=false, 
   showstringspaces=false, 
   breakautoindent=true,
   xleftmargin=1mm,
   xrightmargin=1mm,
   backgroundcolor=\color{white},
   lineskip={0pt}} 
  
\usepackage{float}   
\newfloat{Code}{H}{myc}
   
\newcommand{\field}[1]{\mathbb{#1}}
\newcommand{\R}{\field{R}}
\newcommand{\N}{\field{N}}

\newcommand{\cA}{{\mathcal A}}

\newcommand{\cW}{\mathcal{W}}

\newcommand{\eps}{{\varepsilon}}

\newcommand{\argmin}{\operatorname{argmin}}

\newcommand{\operatorspan}{\operatorname{span}}
\newcommand{\supp}{\operatorname{supp}}
\newcommand{\diam}{\operatorname{diam}}

\newtheorem{lemma}{Lemma}
\newtheorem{theorem}{Theorem}
\newtheorem{corollary}{Corollary}

\newtheorem{proposition}{Proposition}

\title[Dynamic programming using radial basis functions]{Dynamic programming\\ using radial basis functions}

\author[Oliver Junge, Alex Schreiber]{ }

\email{oj@tum.de}
\email{schreiber@ma.tum.de}

\thanks{Research has been partially supported  by the EU Marie Curie initial training
    network \emph{Sensitivity analysis for deterministic controller design (SADCO)} in FP 7.\\ \today}
    
\begin{document}

\maketitle   

\centerline{\scshape Oliver Junge and Alex Schreiber}
\medskip
{\footnotesize
 \centerline{Center for Mathematics}
   \centerline{Technische Universit\"at M\"unchen}
   \centerline{85747 Garching bei M\"unchen}
}

\begin{abstract}
We propose a discretization of the optimality principle in dynamic programming based on radial basis functions and Shepard's moving least squares approximation method.  We prove convergence of the approximate optimal value function to the true one and present  several numerical experiments.
\end{abstract}

\section{Introduction}

For many optimal control problems, solutions can elegantly be characterized and computed by \emph{dynamic programming},  i.e.\ by solving a fixed point equation, the \emph{Bellman equation}, for the optimal value function of the problem.  For every point in the state space of the underlying problem, this function yields the optimal cost associated to this initial condition.  At the same time, the value function allows to construct an optimal controller in feedback form, enabling a robust stabilization of a possibly unstable nominal system.  This controller not only yields optimal cost trajectories, but also a maximal domain of stability for the closed loop system.  

The generality and flexibility of dynamic programming, however, comes at a price:  Since in many cases, closed form solutions of the optimality principle are not available, numerical approximations have to be sought.  This typically hinders the treatment of problems with higher dimensional state spaces, since the numerical effort scales exponentially in its dimension (Bellman's \emph{curse of dimension}). Early works on numerical schemes for the related Hamilton-Jacobi(-Bellman) equations \cite{Ca-Do83a,BaCa-Do97a,Fa87a,Ca-DoFa89a} were based on finite difference or finite element type space discretizations with interpolation type projection operators. Based on these, higher order \cite{FaFe94a,FaFe95a} and adaptive schemes \cite{Gr97a,Gr04a} have been developed, as well as interpolation based approaches for certain systems in dimensions 3-5 \cite{CaFaFe04a}.  While in these works a simple (``value'') iteration is used in order to solve the fixed point problem, one-pass methods reminiscent of Dijkstra's shortest path algorithm can be employed in order to solve the discrete problem more efficiently \cite{SeVl03a,KaOsTs05a,JuOs04a,GrJu05a}.  Similar savings can be obtained by exploiting the fact that the problem becomes linear in the max-plus algebra \cite{McEn06a}.  

In \cite{Cecil2004327,Huang:2006wu,Alwardi2012305}, radial basis functions have been proposed in order to space-discretize time-dependent Hamilton-Jacobi(-Bellman) equations by collocation. The appealing feature of this ``meshfree'' approach is its simplicity: The discretization is given by a set (e.g.\ a grid) of points in phase space which serve as the centers for the basis functions.  In comparison to, e.g., finite element methods, no additional geometric information has to be computed.
Note, however, that per se this does not avoid the curse of dimension.

In this manuscript, we consider a discrete time optimal control problem (which can be obtained from some HJB equation by a discretization in time with fixed time step) and show that using radial basis functions in combination with a  least squares type projection (aka \emph{Shepard's method}) one obtains a simple, yet general scheme for the numerical solution of the problem which also allows for a simple convergence theory (which is not given in the above mentioned works).  After a brief review on radial basis functions in Section~\ref{sec:RBF}, we prove convergence of the approximate value function to the true one as the fill distance of the discrete node set goes to zero (Section~\ref{chapRBFshepard}) and consider several numerical experiments (Section~\ref{sec:experiments}).  The Matlab codes for these examples can be downloaded from the homepages of the authors.

\section{Problem statement}\label{sec:basics}

We consider a discrete time control system
\begin{equation}\label{eq:control_system}
x_{k+1}=f(x_k,u_k), \quad k=0,1,2,\ldots,
\end{equation}
with a continuous map $f: \Omega \times U \to \Omega$ on compact sets $\Omega \subset \R^s$, $U \subset \R^d$, $0\in U$, as phase and control space, respectively.  In addition, we are given a continuous \emph{cost function} $c: \Omega \times U \to [0,\infty)$ and a compact \emph{target set} $T\subset\Omega$ and we assume that $c(x,u)$ is bounded from below by a constant $\delta > 0$ for $x\not\in T$ and all $u\in U$.

Our goal is to design a \emph{feedback law} $F:S\to U$, $S\subset\Omega$,
that stabilizes the system in the sense that discrete trajectories  for the \emph{closed loop system} $x_{k+1}=f(x_k,F(x_k)), k=0,1,2,\ldots,$ reach $T$ in a finite number of steps for starting points in a maximal subset $S \subset \Omega$. In many cases, one also wants to construct $F$ such that the \emph{accumulated cost}
\[
\sum_{k=0}^{K-1} c(x_k,F(x_k)), \quad K = \inf\{k\in\N\mid x_k\in T\},
\]
is minimized. 
In order to construct such a feedback, we employ the \emph{optimality principle}, cf.\ \cite{Be57a,Be87a},
\begin{equation}\label{eq:DPP}
V(x) = \inf_{u\in U}\{c(x,u)+V(f(x,u))\},\quad x\in\Omega \backslash T,
\end{equation}
where $V:\R^s\to [0,\infty]$ is the \emph{(optimal) value function}, with boundary conditions $V|_T=0$ and $V|_{\R^s\backslash\Omega}=\infty$.  Using $V$, we define a feedback by
\[
F(x)=\argmin_{u\in U}\{c(x,u)+V(f(x,u))\},
\]
whenever the minimum exists, e.g. if $V$ is continuous.

\subsubsection*{The Kru{\v{z}}kov transform.}

Typically, some part of the state space $\Omega$ will not be controllable to the target set $T$.  By definition, $V(x)=\infty$ for points $x$ in this part of $\Omega$.  An elegant way to deal (numerically) with the fact that $V$ might attain the value $\infty$, is by the (or more precisely one variant of the)  \emph{Kru{\v{z}}kov transform} $V\mapsto v=\exp(-V(\cdot))$, where we set $\exp(-\infty):= 0$, cf.\ \cite{Kr75a}.  After this transformation, the optimality principle (\ref{eq:DPP}) reads 
\[
v(x) = \sup_{u \in U} \left\{e^{-c(x,u)} v(f(x,u))\right\}, \quad x\in\Omega\backslash T,
\]
the boundary conditions transform to $v|_T=1$ and $v|_{\R^s\backslash\Omega}=0$.  The right hand side of this fixed point equation yields the \emph{Bellman operator}
\begin{equation}\label{eq:Bellman}
\Gamma(v)(x) := \left\{\begin{array}{ll}
\sup_{u \in U} \left\{e^{-c(x,u)} \bar v(f(x,u))\right\} & x\in\Omega\backslash T,\\
1 & x\in T,\\
0 & x\in\R^s\backslash\Omega
\end{array}\right.
\end{equation}
on the Banach space $L^\infty=L^\infty(\R^s,\R)$, where
\[
\bar v(x) := \left\{\begin{array}{ll}
v(x) & x\in\R^s\backslash T, \\
1 & x \in T.
\end{array}\right.
\]
By standard arguments and since we assumed $c$ to be bounded from below by $\delta > 0$ outside of the target set we obtain
$\|\Gamma(v)-\Gamma(w)\|_\infty \leq  L \|v-w\|_\infty$ with
\begin{equation}
\label{eq:L delta}
L = e^{- \delta} = \sup_{x\in\Omega\backslash T}\sup_{u\in U} e^{-c(x,u)} < 1,
\end{equation}
i.e.\ we have by the Banach fixed point theorem
\begin{lemma}
The Bellman operator $\Gamma:L^\infty\to L^\infty$ is a contraction and thus possesses a unique fixed point.
\end{lemma}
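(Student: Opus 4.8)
The plan is to verify the two hypotheses of the Banach fixed point theorem: that $L^\infty$ is a complete metric space (which is standard, as it is a Banach space under the sup-norm) and that $\Gamma$ maps $L^\infty$ into itself and is a contraction. The second point is where the work lies, though most of it has already been assembled in the text preceding the statement.

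First I would check that $\Gamma$ is well-defined as a map $L^\infty\to L^\infty$: given $v\in L^\infty$, the modified function $\bar v$ differs from $v$ only on $T$ and is still essentially bounded, and for $x\in\Omega\backslash T$ the quantity $e^{-c(x,u)}\bar v(f(x,u))$ is bounded in absolute value by $\|\bar v\|_\infty\le\max\{\|v\|_\infty,1\}$ uniformly in $u$, so the supremum over $u\in U$ is finite; on $T$ and on $\R^s\backslash\Omega$ the values are the constants $1$ and $0$. Hence $\Gamma(v)\in L^\infty$. Then I would establish the Lipschitz estimate $\|\Gamma(v)-\Gamma(w)\|_\infty\le L\|v-w\|_\infty$: for $x\in T$ or $x\in\R^s\backslash\Omega$ the difference $\Gamma(v)(x)-\Gamma(w)(x)$ vanishes, so only $x\in\Omega\backslash T$ matters. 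There one uses the elementary inequality $|\sup_u a(u)-\sup_u b(u)|\le\sup_u|a(u)-b(u)|$ together with $|e^{-c(x,u)}\bar v(f(x,u))-e^{-c(x,u)}\bar w(f(x,u))| = e^{-c(x,u)}|\bar v(f(x,u))-\bar w(f(x,u))|\le e^{-c(x,u)}\|v-w\|_\infty$, and finally the bound $e^{-c(x,u)}\le e^{-\delta}=L<1$ coming from the assumption that $c\ge\delta$ outside $T$, exactly as recorded in~\eqref{eq:L delta}. Note $\bar v-\bar w=v-w$ off $T$ and vanishes on $T$, so $\|\bar v-\bar w\|_\infty\le\|v-w\|_\infty$.

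With these two facts in hand, the Banach fixed point theorem applies directly: a contraction on a nonempty complete metric space has a unique fixed point, which is the assertion of the lemma. I do not anticipate a genuine obstacle here; the only mild subtlety is the careful bookkeeping of the auxiliary function $\bar v$ and the observation that the boundary pieces of $\Gamma$ (the constants $1$ on $T$ and $0$ outside $\Omega$) do not depend on the argument and therefore contribute nothing to the Lipschitz difference. This is essentially a matter of writing out what~\eqref{eq:Bellman} and~\eqref{eq:L delta} already give us.
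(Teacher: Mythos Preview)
Your proposal is correct and follows exactly the route the paper indicates: the paper itself does not give a detailed proof but simply notes that ``by standard arguments'' one obtains the Lipschitz estimate $\|\Gamma(v)-\Gamma(w)\|_\infty\le L\|v-w\|_\infty$ with $L=e^{-\delta}<1$ from~\eqref{eq:L delta}, and then invokes the Banach fixed point theorem. Your write-up just makes those standard arguments explicit, including the bookkeeping for $\bar v$ and the boundary pieces, so there is nothing to add.
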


\section{Approximation with radial basis functions}\label{sec:RBF}

Only in simple or special cases (e.g.\ in case of a linear-quadratic problem), $V$ can be expressed in closed form. In general, we need to approximate it numerically.  Here, we are going to use \emph{radial basis functions} for this purpose, i.e.\ functions $\varphi_i:\R^s\to\R$ of the form 
$\varphi_i(x)=\varphi(\|x-x_i\|_2)$ on some set $X = \{x_1, \ldots, x_n\}\subset\R^s$ of \emph{nodes}. We assume the \emph{shape function} $\varphi:\R\to [0,\infty)$ to be nonnegative, typical examples include the Gaussian $\varphi(r) = \varphi^\sigma(r) = \exp(-(\sigma r)^2)$ and the \emph{Wendland functions} $\varphi (r) = \varphi^\sigma(r) = \max \{0, P(\sigma r)\}$, cf.\ \cite{We95a, Fa07a},
with $P$ an appropriate polynomial.  The \emph{shape parameter} $\sigma$ controls the ``width'' of the radial basis functions and has to be chosen rather carefully.  In case that the shape function $\varphi$ has compact support, we will need to require that the supports of the $\varphi_i$ cover $\Omega$.

\subsection{Interpolation.}

One way to use radial basis functions for approximation is by (``scattered data'') interpolation:  We make the ansatz   
\[
\tilde v (x)  = \sum_{i=1}^n c_i \varphi_i(x), \; \; c_i \in \R,
\]
for the approximate fixed point $\tilde v$ of (\ref{eq:Bellman}) and require $\tilde v$ to fulfill the interpolation conditions $\tilde v(x_i) = v_i $ for prescribed values $v_i,i=1,\ldots,n$. 
 The coefficient vector $c=(c_1,\ldots,c_n)$ is then given by  the solution of the linear system $Ac = v$ with $A = (\varphi_j(x_i))_{ij}$ and $v = (v_1,\ldots,v_n)$.

Formally, for some function $v:\Omega\to\R$, we can define its \emph{interpolation approximation} $Iv:\Omega\to\R$ by
\[
Iv = \sum_{i=1}^n v(x_i) u_i^*,
\]
where the $u_i^*:\Omega\to\R$ are \emph{cardinal basis functions} associated with the nodes $X$, i.e.\ a nodal basis with $u_i^*(x_i) = 1$ and $u_i^*(x_j) = 0$ for $i \neq j$. Note that $Iv$ depends linearly on $v$.

As we will see later (cf.\ Section~\ref{subsec:interp}), using interpolation for approximation has some shortcomings in our context.  As an improvement, we will use a least-squares type approach for function approximation known as \emph{Shepard's method}, cf.\ \cite{Fa07a}, which we will sketch in the following.

\subsection{Weighted least squares.}

Given some \emph{approximation space} $\cA = \text{span}(a_1$, $\ldots$, $a_m)$, $a_i:\Omega\to\R$, $m < n$, and a weight function $w:\Omega\to(0,\infty)$, we define the discrete inner product
\[
\langle f,g\rangle_w := \sum_{i=1}^n f(x_i)g(x_i)w(x_i),
\]
for $f,g:\Omega\to\R$ with  induced norm $\|\cdot\|_w$.  The \emph{weighted least squares approximant} $\tilde v\in\cA$ of some function $v:\Omega\to\R$ is then defined by minimizing $\|v-\tilde v\|_w$.  The solution is given by $\tilde v = \sum_{i=1}^m c_i a_i$, where the optimal coefficient vector $c=(c_1,\ldots,c_m)$ solves the linear system $Gc = v_\cA$ with Gram matrix $G=(\langle a_i,a_j\rangle_w)_{ij}$ and $v_\cA=(\langle v,a_j\rangle_w)_j$.

\subsection{Moving least squares.}

When constructing a least squares approximation to some function $f:\Omega\to\R$ at $x\in\Omega$, it is often natural to require that only the values $f(x_j)$ at some nodes $x_j$ close to $x$ should play a significant role.  This can be modeled by introducing a \emph{moving} weight function $w:\Omega\times\Omega\to\R$, where $w(\xi,x)$ is small if $\|\xi-x\|_2$ is large.  In the following, we will use a radial weight
\[
w(\xi,x)=\varphi(\|\xi-x\|_2),
\]
where $\varphi=\varphi^\sigma$ is the shape function introduced before. The corresponding discrete inner product is  
\[
\langle f,g\rangle_{w(\cdot,x)} := \sum_{i=1}^n f(x_i)g(x_i)w(x_i,x).
\]
The \emph{moving least squares approximation} $\tilde v$ of some function $v:\Omega\to\R$ is then given by $\tilde v(x) = \tilde v^x(x)$, where $\tilde v^x\in\cA$ is minimizing $\|v-\tilde v^x\|_{w(\cdot,x)}$. The optimal coefficient vector $c^x$ is given by the solution of the Gram system $G^xc^x=v^x_\cA$ with $G^x=(\langle a_i,a_j\rangle_{w(\cdot,x)})_{ij}$ and $v^x_\cA=(\langle v,a_j\rangle_{w(\cdot,x)})_j$.

\subsection{Shepard's method}

We now simply choose $\cA=\operatorspan (1)$ as approximation space. Then the Gram matrix is $G^x = \langle 1,1 \rangle_{w(\cdot,x)} = \sum_{i=1}^n w(x_i,x)$ and the right hand side is $v^x_\cA = \langle v, 1\rangle_{w(\cdot,x)} = \sum_{i=1}^n v(x_i)w(x_i,x)$. Thus we get $c^x = v^x_\cA/G^x = \sum_{i=1}^n v(x_i)\psi_i(x)$, where
\begin{equation}\label{eq:psi}
\psi_i(x) := \frac{w(x_i,x)}{\sum_{j=1}^n w(x_j,x)}.
\end{equation}
We define the \emph{Shepard approximation} $Sv:\Omega\to\R$ of $v:\Omega\to\R$ as
\[
Sv(x) = c^x\cdot 1 = \sum_{i=1}^n v(x_i)\psi_i(x), \quad x\in\Omega.
\]
Note again that $Sv$ depends linearly on $v$.  What is more, for each $x$, $Sv(x)$ is a convex combination of the values $v(x_1),\ldots,v(x_n)$, since
$\sum_{i=1}^n \psi_i(x) = 1$ for all $x\in\Omega$.

Shepard's method has several advantages over interpolation:  (a) Computing $Sv$ in a finite set of points only requires a matrix-vector product (in contrast to a linear solve for interpolation),  (b) the discretized Bellman operator remains a contraction, since the Shepard operator $S$ is non-expanding (cf.\ Lemma~\ref{lem:Shepard_non_expanding} in the next section), and (c) the approximation behavior for an increasing number of nodes is more favorable, as we will outline next.

\subsection{Stationary vs.\ non-stationary approximation}
 The number
\[
h := h_{X,\Omega} := \max_{x\in \Omega} \min_{\xi \in X} \| x-\xi\|
\]
is called the \emph{fill distance} of $X$ in $\Omega$, while
\[
q_X := \frac12 \min_{x,\xi\in X\atop x\neq \xi} \|x - \xi\|_2
\]
is the \emph{separation distance}. The fill distance is the radius of the largest ball inside $\Omega$ that is disjoint from $X$.  

In \emph{non-stationary} approximation with radial basis functions,  the shape parameter $\sigma$ is kept constant while the fill distance $h$ goes to $0$.  Non-stationary interpolation is convergent:
\begin{theorem}[\cite{Fa07a}, Theorem 15.3]
Assume that the Fourier transform $\hat\varphi$ of $\varphi$ fulfills 
\[
c_1(1 + \| \omega\|_2^2)^{-\tau} \leq \hat \varphi(\omega) \leq c_2 (1 + \| \omega\|_2^2)^{-\tau} 
\]
for some constants $c_1, c_2,\tau > 0$. In addition, let $k$ and $n$ be integers with $0 \leq n < k \leq \tau$ and $k > s/2$, and let $f \in C^k(\bar \Omega)$. Also suppose that $X = \{ x_1, \dots, x_n \} \subset \Omega$ satisfies $\diam(X) \leq 1$ with sufficiently small fill distance. Then for any $1 \leq q \leq \infty$ we have
\[
|f - If|_{W_q^n(\Omega)} \leq c\rho_X^{\tau-x} h^{k-n-s(1/2-1/q)_+} \|f\|_{C^k(\bar \Omega)},
\]
where $\rho_X = h/q_X$ is the so-called mesh ratio for $X$.
\end{theorem}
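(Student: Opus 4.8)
The plan is to deduce the estimate for the rough target $f\in C^k(\bar\Omega)$ from the much better understood case of a target lying in the \emph{native space} $\cN_\varphi$ of $\varphi$. The Fourier-transform sandwich hypothesis is precisely the statement that $\cN_\varphi(\R^s)$ is norm-equivalent to the Sobolev space $H^\tau(\R^s)$; since $k\le\tau$, a general $f\in C^k(\bar\Omega)$ need not lie in $H^\tau$, so the first move is to \emph{regularize} it. I would extend $f$ to $\R^s$ by a bounded linear (Stein-type) extension operator $C^k(\bar\Omega)\to C^k(\R^s)$ with support in a fixed neighborhood of $\Omega$, then truncate its Fourier transform at frequency of order $1/h$ to obtain a band-limited function $g\in H^\tau$. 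On $\Omega$ one then writes
\[
f-If=(f-g)+(g-Ig)+I(g-f)
\]
and estimates the three pieces separately.

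For the first piece, standard Jackson-type bounds for frequency truncation give $|f-g|_{W_q^n(\Omega)}\le C\,h^{k-n-s(1/2-1/q)_+}\|f\|_{C^k(\bar\Omega)}$, while the choice of bandwidth forces $\|g\|_{H^\tau(\R^s)}\le C\,h^{-(\tau-k)}\|f\|_{C^k(\bar\Omega)}$ --- the native-space norm of the regularization is deliberately allowed to grow as $h\to0$. For the second piece, $g$ now lies in the native space, so the sharp scattered-data interpolation estimate (the $L^q$-refinement in the spirit of Narcowich, Ward and Wendland, as in \cite{Fa07a}) applies: $|g-Ig|_{W_q^n(\Omega)}\le C\,\rho_X^{\tau-n}\,h^{\tau-n-s(1/2-1/q)_+}\|g\|_{H^\tau}$; multiplying by the norm bound just obtained collapses the $h$-exponent to exactly $k-n-s(1/2-1/q)_+$ and leaves the mesh-ratio factor $\rho_X^{\tau-n}$. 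For the third piece, $\eta:=g-f$ has small nodal values, $\max_i|\eta(x_i)|\le\|\eta\|_{L^\infty(\Omega)}\le C\,h^{k}\|f\|_{C^k(\bar\Omega)}$ (here $k>s/2$ guarantees the truncation error is continuous), and an inverse (Bernstein) inequality for the finite-dimensional interpolation space bounds $|I\eta|_{W_q^n(\Omega)}$ by $\max_i|\eta(x_i)|$ times a negative power of the separation distance $q_X=h/\rho_X$; with the regularization scale chosen appropriately this again fits under $C\,\rho_X^{\tau-n}h^{k-n-s(1/2-1/q)_+}\|f\|_{C^k(\bar\Omega)}$. Adding the three contributions yields the claim.

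The routine ingredients are the identification $\cN_\varphi\cong H^\tau$, the existence of a bounded $C^k$-extension operator, and the Bramble--Hilbert machinery behind the Jackson and Bernstein inequalities. The hard part --- and the reason I would simply invoke \cite{Fa07a} here rather than reproduce the proof --- is the regularization step: one must produce a single $g$ that is simultaneously close to $f$ in $W_q^n(\Omega)$ \emph{and} has native-space norm growing no faster than $h^{-(\tau-k)}$, and then thread the mesh ratio $\rho_X$ coherently through all three estimates so that it ends up with the single exponent $\tau-n$ and the $L^q$-loss appears exactly as $s(1/2-1/q)_+$. Making all constants uniform in $X$ while keeping these exponents sharp is the technical heart of the argument.
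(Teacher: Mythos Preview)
The paper does not contain a proof of this theorem at all: it is quoted verbatim from \cite{Fa07a}, Theorem~15.3, as background on non-stationary interpolation, and the authors make no attempt to reprove it. There is therefore nothing in the paper to compare your proposal against.

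That said, your sketch is the correct outline of the argument as it appears in the cited source. The regularization-via-band-limiting, the three-term splitting $f-If=(f-g)+(g-Ig)+I(g-f)$, the identification $\cN_\varphi\cong H^\tau$ from the Fourier sandwich hypothesis, and the combination of the native-space error bound with an inverse (Bernstein) inequality on the discrete data is precisely the Narcowich--Ward--Wendland ``escaping the native space'' strategy that \cite{Fa07a} presents. Your identification of the exponent bookkeeping (the deliberate growth $\|g\|_{H^\tau}\lesssim h^{-(\tau-k)}$ cancelling against the native-space rate, and the mesh ratio entering through $q_X=h/\rho_X$) is also right; note incidentally that the exponent $\tau-x$ in the displayed statement is a typo in the paper for $\tau-n$, which you silently corrected.
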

On the other hand, \emph{stationary} interpolation, i.e.\ letting $\sigma$ shrink to $0$ along with $h$ does not converge (for a counter example see \cite{Fa07a}, Example 15.10). 

For Shepard's method instead of interpolation, though, the exact opposite holds: Non-stationary approximation does not converge (cf.\ \cite{Fa07a}, Ch.\ 24), while stationary approximation does, as we will recall in Lemma \ref{shepard stationary convergence} below.  In practice, this is an advantage, since we can keep the associated matrices sparse. 

\section{Discretization of the optimality principle}
\label{chapRBFshepard} 

We now want to compute an approximation to the fixed point of the Bellman operator (\ref{eq:Bellman}) by \emph{value iteration}, i.e.\ by iterating $\Gamma$ on some initial function $v^{(0)}$.  We would like to perform this iteration inside some finite dimensional approximation space $\cW\subset L^\infty$, i.e.\ after each application of $\Gamma$, we need to map back into $\cW$. 

\subsection*{Interpolation}\label{subsec:interp}

Choosing $\cW=\operatorspan(\varphi_1,\ldots,\varphi_n)$, we define the \emph{Bellman interpolation operator} to be
\[
\hat \Gamma := I\circ \Gamma:\cW\to\cW.
\]
In general, the operator $I$ is expansive and not necessarily monotone.  For that reason, one cannot rely on the iteration with $\hat \Gamma$ to be convergent (although in most of our numerical experiments it turned out to converge) and move our focus towards the value iteration with Shepard's method.

\subsection*{Shepard's method.}

With $\cW=\operatorspan(\psi_1,\ldots,\psi_n)$ (cf.\ (\ref{eq:psi})), we define the \emph{Bellman-Shepard operator} as
\[
\tilde \Gamma := S\circ \Gamma:\cW\to\cW.
\]
Explicitly, the value iteration reads
\begin{equation}\label{eq:value iteration}
v^{(k+1)} := S\left(\Gamma[v^{(k)}]\right), \quad k=0,1,2,\ldots,
\end{equation}
where, as mentioned, some initial function $v^{(0)}\in\cW$ has to be provided.

\begin{lemma}\label{lem:Shepard_non_expanding}
The Shepard operator $S:(L^\infty,\|\cdot\|_\infty)\to (\cW,\|\cdot\|_\infty)$ has norm~$1$.
\end{lemma}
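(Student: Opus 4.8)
The plan is to show two inequalities: that $\|S\|\le 1$ (non-expansiveness) and that $\|S\|\ge 1$ (the norm is attained). For the upper bound, fix $v\in L^\infty$ and a point $x\in\Omega$. Since $Sv(x)=\sum_{i=1}^n v(x_i)\psi_i(x)$ is a convex combination of the values $v(x_1),\ldots,v(x_n)$ — the weights $\psi_i(x)$ being nonnegative (as $\varphi\ge 0$, so $w\ge 0$) and summing to $1$ — we immediately get
\[
|Sv(x)| \le \sum_{i=1}^n \psi_i(x)\,|v(x_i)| \le \sum_{i=1}^n \psi_i(x)\,\|v\|_\infty = \|v\|_\infty.
\]
Taking the supremum over $x\in\Omega$ yields $\|Sv\|_\infty\le\|v\|_\infty$, hence $\|S\|\le 1$.

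For the lower bound, it suffices to exhibit a single $v$ with $\|Sv\|_\infty = \|v\|_\infty$. Take $v\equiv 1$ (or any nonzero constant); then $Sv(x)=\sum_i \psi_i(x) = 1$ for every $x$, so $\|Sv\|_\infty = 1 = \|v\|_\infty$. Combining the two bounds gives $\|S\|=1$. One should also note in passing that $S$ is well-defined as a map into $\cW=\operatorname{span}(\psi_1,\ldots,\psi_n)$: the denominator $\sum_{j=1}^n w(x_j,x)$ in \eqref{eq:psi} must be strictly positive for each $x\in\Omega$, which holds under the standing assumption that the supports of the $\varphi_i$ cover $\Omega$ (automatic when $\varphi$ has no compact support, e.g. the Gaussian); this guarantees the $\psi_i$ are genuinely defined on all of $\Omega$.

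There is essentially no serious obstacle here — the result is a direct consequence of the partition-of-unity property $\sum_i\psi_i\equiv 1$ together with nonnegativity of the weights, both of which are recorded in the text preceding the lemma. The only point requiring a word of care is the measurability/$L^\infty$ issue: $S$ is stated as acting on all of $L^\infty$, but $Sv$ only depends on the point values $v(x_1),\ldots,v(x_n)$, which are not well-defined for an arbitrary $L^\infty$ equivalence class. In the intended usage $S$ is applied to $\Gamma(v)$, which is a genuine (pointwise-defined, indeed continuous off $T$ and $\partial\Omega$) function, so this is a harmless abuse; if one wants to be pedantic, one restricts $S$ to a subspace of $L^\infty$ of honest functions, or simply reads the statement as "$S$ has operator norm $1$ with respect to the sup-norm," which is what the convexity estimate above establishes.
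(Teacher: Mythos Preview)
Your proof is correct and follows essentially the same approach as the paper's: the upper bound $\|Sv\|_\infty\le\|v\|_\infty$ comes from the nonnegativity and partition-of-unity property of the $\psi_i$, and equality is witnessed by constant functions. Your additional remarks on well-definedness of the denominator and the $L^\infty$-pointwise-evaluation issue are accurate but go beyond what the paper addresses.
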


\begin{proof}
Since, by assumption, the $\psi_i$ are nonnegative and, as mentioned above, for each $x\in\Omega$, $Sv(x)$ is a convex combination of the values $v(x_1),\ldots,v(x_n)$, we have for each $x\in\Omega$
\begin{align*}
|Sv(x)| \leq \sum_{i=1}^n |v(x_i)\psi_i(x)| \leq \max_{i=1,\ldots,n}|v(x_i)| \sum_{i=1}^n |\psi_i(x)| \leq \max_{i=1,\ldots,n}|v(x_i)| \leq \|v\|_\infty,
\end{align*}
so that $\|Sv\|_\infty\leq \|v\|_\infty$.  Moreover, for constant $v$ one has $\|Sv\|_\infty= \|v\|_\infty$.
\end{proof}

As a composition of the contraction $\Gamma$ and the Shepard operator $S$, we get
\begin{theorem}\label{thm:value iteration}
The Bellman-Shepard operator $\tilde\Gamma:(\cW,\|\cdot\|_\infty)\to (\cW,\|\cdot\|_\infty)$ is a contraction, thus the iteration (\ref{eq:value iteration}) converges to the unique fixed point $\tilde v\in\cW$ of $\tilde\Gamma$.
\end{theorem}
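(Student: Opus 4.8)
The plan is to deduce the statement directly from the two facts already in hand: the Bellman operator $\Gamma$ is a contraction on $L^\infty$ with constant $L = e^{-\delta} < 1$ (cf.\ (\ref{eq:L delta}) and the lemma following it), and the Shepard operator $S$ is non-expansive by Lemma~\ref{lem:Shepard_non_expanding}. Since $S$ is in addition \emph{linear}, the composition $\tilde\Gamma = S\circ\Gamma$ retains the contraction constant of $\Gamma$.

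First I would note that $\tilde\Gamma$ indeed maps $\cW$ into itself, which is immediate because $S$ takes values in $\cW = \operatorspan(\psi_1,\dots,\psi_n)$ by construction. Then, for arbitrary $v, w \in \cW$, I would chain the estimates
\[
\|\tilde\Gamma v - \tilde\Gamma w\|_\infty = \|S(\Gamma v) - S(\Gamma w)\|_\infty = \|S(\Gamma v - \Gamma w)\|_\infty \leq \|\Gamma v - \Gamma w\|_\infty \leq L\,\|v-w\|_\infty,
\]
where the second equality uses linearity of $S$, the first inequality uses $\|S\|_\infty = 1$ from Lemma~\ref{lem:Shepard_non_expanding}, and the last uses that $\Gamma$ is an $L$-contraction. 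Since $L < 1$, this shows $\tilde\Gamma$ is a contraction on $(\cW,\|\cdot\|_\infty)$.

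To conclude convergence of the iteration (\ref{eq:value iteration}), I would invoke the Banach fixed point theorem, for which the underlying space must be complete: $(\cW, \|\cdot\|_\infty)$ is a finite-dimensional, hence closed, linear subspace of the Banach space $L^\infty$ and is therefore complete. Banach's theorem then supplies a unique fixed point $\tilde v\in\cW$ of $\tilde\Gamma$, and the iterates $v^{(k)}$ converge to it geometrically with rate $L$ from any initial $v^{(0)}\in\cW$.

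I do not expect any genuine obstacle here; the result is a soft corollary of the two preceding lemmas together with the Banach fixed point theorem. The only point deserving a word of care is that the non-expansiveness of $S$ must be used jointly with its linearity, so that $S\Gamma v - S\Gamma w$ is first rewritten as $S(\Gamma v - \Gamma w)$ before the norm bound $\|S\|_\infty = 1$ is applied; non-expansiveness alone would not suffice.
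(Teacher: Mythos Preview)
Your argument is correct and matches the paper's own treatment, which consists of nothing more than the sentence ``As a composition of the contraction $\Gamma$ and the Shepard operator $S$, we get\ldots'' preceding the theorem statement; you have simply spelled out the details the paper leaves implicit.

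One small correction to your closing remark: non-expansiveness in the standard sense, i.e.\ $\|Sv-Sw\|_\infty \le \|v-w\|_\infty$, \emph{would} suffice on its own, without invoking linearity. What Lemma~\ref{lem:Shepard_non_expanding} actually establishes is the bound $\|Sv\|_\infty \le \|v\|_\infty$, and it is \emph{this} inequality that needs linearity to be upgraded to non-expansiveness --- so your caveat is right in spirit but mislabelled.
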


\subsection*{Convergence for decreasing fill distance}




\label{subsec:ConvergenceOfShepardApproximants}

Assume we are given a Lipschitz function $v:\Omega\to\R$ and a sequence of sets of nodes $X_k$ with fill distances $h_k$.
We consider the Shepard operators $S_k$ associated with the sets $X_k$ and shape parameters $\sigma_k := C_\sigma/h_k$ for some constant $C_\sigma>0$. 
Under the assumption that $h_k \rightarrow 0$ we get convergence of the respective Shepard approximations $S_kv$ to $v$ as shown in the following result, which is a minor generalization of the statement preceding theorem 25.1 in \cite{Fa07a} from $C^1$ functions to Lipschitz functions.
Here, we consider only shape functions with compact support.

\begin{lemma}
\label{shepard stationary convergence}
Let $\Omega \subset \R^s$ and $v: \Omega \rightarrow \R$ be Lipschitz continuous with Lip\-schitz constant $L_v$.
Then 
\[
\|v - S_kv\|_\infty \leq  L_v \frac{\rho}{C_\sigma} h_k
\]
where $\rho$ is a number such that $U_{\rho}(0) \supset \supp(\varphi)$ and $\varphi = \varphi^1$ is the shape function with shape parameter $\sigma = 1$.
\end{lemma}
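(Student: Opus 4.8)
The plan is to build on the two structural facts about Shepard's method recorded above: for every $x \in \Omega$, the value $S_k v(x) = \sum_{i=1}^n v(x_i)\psi_i^{(k)}(x)$ is a convex combination of the nodal values, with $\psi_i^{(k)} \ge 0$ and $\sum_i \psi_i^{(k)}(x) = 1$, and — since we restrict to compactly supported shape functions — each weight $\psi_i^{(k)}(x)$ is nonzero only for nodes $x_i$ near $x$. Starting from the identity
\[
v(x) - S_k v(x) = \sum_{i=1}^{n}\bigl(v(x) - v(x_i)\bigr)\psi_i^{(k)}(x),
\]
valid because the $\psi_i^{(k)}(x)$ sum to $1$, I would take absolute values and use the nonnegativity of the weights together with the Lipschitz estimate $|v(x) - v(x_i)| \le L_v\|x - x_i\|_2$ to obtain
\[
|v(x) - S_k v(x)| \le L_v\sum_{i=1}^{n}\|x - x_i\|_2\,\psi_i^{(k)}(x).
\]

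The crucial step is the localization: recalling that the scaled shape functions satisfy $\varphi^{\sigma}(r) = \varphi(\sigma r)$ with $\varphi = \varphi^1$, the moving weight $w_k(x_i,x) = \varphi(\sigma_k\|x_i - x\|_2)$, and hence $\psi_i^{(k)}(x)$, vanishes whenever $\sigma_k\|x_i - x\|_2 \ge \rho$, because $\supp(\varphi) \subset U_{\rho}(0)$. So a term in the last sum can be nonzero only if $\|x - x_i\|_2 < \rho/\sigma_k = (\rho/C_\sigma)h_k$, whence
\[
|v(x) - S_k v(x)| \le L_v\,\frac{\rho}{C_\sigma}\,h_k\sum_{i=1}^{n}\psi_i^{(k)}(x) = L_v\,\frac{\rho}{C_\sigma}\,h_k .
\]
Taking the supremum over $x \in \Omega$ gives the assertion.

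The computation is short; the point that needs care is that the manipulation above silently assumes $S_k v$ is defined at every $x \in \Omega$, i.e.\ that the Shepard denominator $G^x = \sum_j w_k(x_j,x)$ in (\ref{eq:psi}) is strictly positive there. This is exactly where the fill-distance hypothesis enters: by definition of $h_k$ there is a node within distance $h_k$ of $x$, hence at scaled distance at most $\sigma_k h_k = C_\sigma$ from $x$, and the requirement from Section~\ref{sec:RBF} that the (scaled) shape-function supports cover $\Omega$ — which one may enforce by choosing the constant $C_\sigma$ small enough relative to the support radius of $\varphi$ — makes this node contribute a positive weight, so $G^x>0$. I expect this well-definedness bookkeeping, together with keeping the roles of $\varphi^{\sigma_k}$ and $\varphi = \varphi^1$ straight in the support rescaling, to be the only slightly fiddly part; everything else is the two displays above.
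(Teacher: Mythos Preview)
Your argument is correct and is essentially the same as the paper's: both rest on the observation that $S_k v(x)$ is a convex combination of the values $v(x_i)$ over nodes lying in $U_{\rho/\sigma_k}(x)$, so Lipschitz continuity immediately gives $|v(x)-S_k v(x)|\le L_v\,\rho/\sigma_k = L_v(\rho/C_\sigma)h_k$. Your write-up is more explicit (spelling out the sum identity and the well-definedness of the Shepard denominator), but the underlying idea is identical.
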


\begin{proof}
By the scaling effect of $\sigma$ we have
\[
U_{\rho / \sigma}(0) \supset \supp(\varphi^\sigma)
\]
as well as
\[
\sup_{x \in U_{\rho / \sigma} (x_0)} |v(x)-v(x_0)| \leq L_v \frac{\rho}\sigma, \;\;\; x_0 \in \Omega
\]
and, as a consequence,
\[
|v(x) - Sv(x)| \leq L_v \frac{\rho}\sigma = L_v \frac{\rho}{C_\sigma} h_k 
\]
because the Shepard approximation is given by a convex combination of values of $v$ inside a $\frac{\rho}{\sigma}$-neighborhood.
\end{proof}

Note that the value functions $V$ resp.\ $v$ considered above are Lipschitz continuous (a proof is given in the Appendix).  

We now show that this Lemma implies convergence of the approximate value functions for decreasing fill distance.

\begin{theorem}
\label{ConvergenceOfShepardApproximants}
Assume that $f:\Omega\times U\to\Omega$  in (\ref{eq:control_system}) is continuously differentiable and that the associated cost function $c:\Omega\times U\to [0,\infty)$ is Lipschitz continuous in $x$.
Let a sequence of finite sets of nodes $X_k\subset \Omega $ with fill distances $h_k>0$ be given and let $\sigma_k = C_\sigma / h_k$, $C_\sigma>0$, be the associated sequence of shape parameters as well as  $S_k$ the associated Shepard approximation operators.  Let $v$ be the fixed point of $\Gamma$ and $\tilde v_k$ the fixed points of $\tilde \Gamma_k = S_k \circ \Gamma$. Then
\[
\| v-\tilde v_k \|_\infty 
\leq \frac{L_v \rho}{C_\sigma (1-e^{-\delta})} h_k.
\]
\end{theorem}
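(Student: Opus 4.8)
The plan is to combine the contraction estimate for $\tilde\Gamma_k$ with the pointwise approximation bound from Lemma~\ref{shepard stationary convergence}, using the triangle inequality to relate $v$ and $\tilde v_k$ through the intermediate function $S_k v$. Explicitly, I would write
\[
\|v - \tilde v_k\|_\infty \leq \|v - S_k v\|_\infty + \|S_k v - \tilde v_k\|_\infty,
\]
and estimate the two terms separately. The first term is handled directly by Lemma~\ref{shepard stationary convergence} (together with the remark, proved in the Appendix, that $v$ is Lipschitz), giving $\|v - S_k v\|_\infty \leq L_v \rho h_k / C_\sigma$.

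For the second term, the key observation is that $v$ is a fixed point of $\Gamma$, so $S_k v = S_k(\Gamma v) = \tilde\Gamma_k v$, while $\tilde v_k = \tilde\Gamma_k \tilde v_k$. Hence
\[
\|S_k v - \tilde v_k\|_\infty = \|\tilde\Gamma_k v - \tilde\Gamma_k \tilde v_k\|_\infty \leq L \|v - \tilde v_k\|_\infty,
\]
where $L = e^{-\delta} < 1$ is the contraction constant of $\tilde\Gamma_k$ (here I use Theorem~\ref{thm:value iteration}, or more directly Lemma~\ref{lem:Shepard_non_expanding} combined with the contraction property~(\ref{eq:L delta}) of $\Gamma$; note the contraction constant is the same for all $k$ since $S_k$ is non-expanding). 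Substituting back,
\[
\|v - \tilde v_k\|_\infty \leq \frac{L_v \rho}{C_\sigma} h_k + e^{-\delta}\|v - \tilde v_k\|_\infty,
\]
and since $e^{-\delta} < 1$ we may absorb the last term on the left, obtaining
\[
\|v - \tilde v_k\|_\infty \leq \frac{L_v \rho}{C_\sigma(1 - e^{-\delta})} h_k,
\]
which is the claimed bound.

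There is one point requiring a little care rather than a genuine obstacle: Lemma~\ref{shepard stationary convergence} is stated for a function on $\Omega$, whereas $v$ and $\Gamma v$ are defined on all of $\R^s$ with the boundary values $v|_{\R^s\setminus\Omega}=0$; I would note that the Shepard operators $S_k$ only ever sample at nodes $X_k\subset\Omega$ and that $\tilde v_k\in\cW$ is supported near $\Omega$, so all the norms above can be read as suprema over $\Omega$, and the Lipschitz bound from the Appendix applies there. I should also remark that the Lipschitz continuity of $v$ on $\Omega$ is exactly where the smoothness hypotheses on $f$ and $c$ enter: $f\in C^1$ and $c$ Lipschitz in $x$ guarantee (via the Appendix argument) a finite Lipschitz constant $L_v$ for $v$, and hence for the transformed value function. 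Apart from this bookkeeping, the argument is the short two-term triangle-inequality-plus-contraction estimate sketched above, so the main "work" is really just making sure the contraction constant and the approximation constant are the uniform ones appearing in the statement.
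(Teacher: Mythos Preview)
Your proof is correct and is essentially identical to the paper's own argument: the paper also splits $\|v-\tilde v_k\|_\infty$ via the triangle inequality into the residual term $\|v-\tilde\Gamma_k(v)\|_\infty=\|v-S_kv\|_\infty$ (bounded by Lemma~\ref{shepard stationary convergence}) and the contraction term $\|\tilde\Gamma_k(\tilde v_k)-\tilde\Gamma_k(v)\|_\infty\leq e^{-\delta}\|v-\tilde v_k\|_\infty$, then absorbs the latter. Your remarks on where the Lipschitz hypothesis enters and on restricting attention to $\Omega$ are accurate bookkeeping and do not alter the structure of the proof.
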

\begin{proof}
Let $e_k$ be the norm of the residual of $v$ in the Bellman-Shepard equation, i.e.
\[
e_k = \| v-\tilde\Gamma_k(v) \|_\infty = \| v-S_kv\|_\infty.
\]
Then 
\begin{align*}
\| v-\tilde v_k \|_\infty 
& \leq  \| v-\tilde\Gamma_k(v) \|_\infty + \|\tilde v_k - \tilde\Gamma_k (v)\|_\infty \\
& = e_k + \|\tilde\Gamma_k (\tilde v_k) - \tilde\Gamma_k (v) \|_\infty \\
& \leq e_k + e^{-\delta} \|\tilde v_k - v \|_\infty.
\end{align*}
Consequently,
\[
\| v-\tilde v_k \|_\infty 
\leq \frac {e_k} {1- e^{-\delta}}
= \frac{L_v \rho}{C_\sigma (1-e^{-\delta})} h_k   .
\]
\end{proof}

\subsection*{Construction of a stabilizing feedback}

As common in dynamic programming, we now use the approximate value function $\tilde V(x)=-\log(\tilde v(x))$, $x \in S:=\{x\in \Omega: \tilde V(x) < \infty\}$, in order to construct a feedback which stabilizes the closed loop system on a certain subset of $\Omega$. This feedback is
\[
\tilde u(x) := \argmin_{u \in U} \{ c(x,u)+\tilde V(f(x,u)) \}, \quad x\in S.
\]
Note that the argmin exists, since $U$ is compact and $c$ and $\tilde V$ are continuous.  
We define the \emph{Bellman residual}
\begin{align*}
e(x) &:= \inf_{u \in U} \{ c(x,u) + \tilde V(f(x,u)) \} - \tilde V(x), \quad x\in S.
\end{align*}
and show that $\tilde V$ decreases along a trajectory of the closed loop system if the set
\[
R_\eta := \{x\in S \mid e(x) \leq \eta \tilde c(x)\}, \quad \eta\in (0,1),
\]
where the Bellman residual is (at least at a constant factor) smaller than $\tilde c(x) := c(x,\tilde u(x))$, contains a sublevel set of $\tilde V$ and we start in this set.
\begin{proposition}\label{prop:decay}
Suppose that $D_C = \{x\in S\mid \tilde V(x) < C\}\subset R_\eta$ for some $C > 0$. Then for any $x_0\in D_C$, the associated trajectory generated by the \emph{closed loop system} 
$x_{j+1} = f(x_j,\tilde u(x_j))$, $j=0,1,\ldots$, stays in $D_C$ and satisfies 
\[
\tilde V(x_\ell) \leq \tilde V(x_0) - (1-\eta)\sum_{j=0}^{\ell-1} c(x_j,\tilde u(x_j)).
\]
\end{proposition}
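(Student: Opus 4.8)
The plan is to argue by induction on $\ell$ that the trajectory stays in $D_C$ and that the claimed inequality holds. The crucial observation is that the Bellman residual $e(x)$ is precisely the one-step decrease of $\tilde V$ along the closed loop system: for $x\in S$,
\[
\tilde V(f(x,\tilde u(x))) = \tilde V(x) + e(x),
\]
since $\tilde u(x)$ realizes the infimum defining $e(x)$. Hence the single-step estimate is entirely a matter of bounding $e(x)$, and this is where the hypothesis $D_C\subset R_\eta$ enters: for $x_j\in D_C$ we have $e(x_j)\le\eta\,\tilde c(x_j) = \eta\, c(x_j,\tilde u(x_j))$, so
\[
\tilde V(x_{j+1}) = \tilde V(x_j) + e(x_j) \le \tilde V(x_j) - (1-\eta)\,c(x_j,\tilde u(x_j)) + \bigl(e(x_j) + (1-\eta)c(x_j,\tilde u(x_j))\bigr),
\]
where the parenthesized term equals $e(x_j) - \eta\,c(x_j,\tilde u(x_j))\le 0$. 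So $\tilde V(x_{j+1})\le \tilde V(x_j)$, and since $c\ge 0$ this also gives $\tilde V(x_{j+1}) < C$, i.e.\ $x_{j+1}\in D_C$.

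With this in hand the induction is routine. For the base case $\ell=0$ both sides read $\tilde V(x_0)\le\tilde V(x_0)$ and $x_0\in D_C$ by assumption. For the inductive step, assuming $x_0,\dots,x_{\ell-1}\in D_C$ and $\tilde V(x_{\ell-1})\le \tilde V(x_0) - (1-\eta)\sum_{j=0}^{\ell-2}c(x_j,\tilde u(x_j))$, apply the single-step estimate at $x_{\ell-1}$ (legitimate since $x_{\ell-1}\in D_C\subset R_\eta$) to get $x_\ell\in D_C$ and
\[
\tilde V(x_\ell) \le \tilde V(x_{\ell-1}) - (1-\eta)\,c(x_{\ell-1},\tilde u(x_{\ell-1})) \le \tilde V(x_0) - (1-\eta)\sum_{j=0}^{\ell-1}c(x_j,\tilde u(x_j)),
\]
closing the induction.

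I expect no serious obstacle here; the only point requiring a little care is the well-definedness of the iteration, namely that $\tilde u(x_j)$ and hence $x_{j+1}=f(x_j,\tilde u(x_j))$ are defined and that $x_{j+1}$ again lies in $S$ (so that $\tilde V(x_{j+1})$ makes sense and $e(x_{j+1})$ can be evaluated at the next step). The existence of the argmin is already noted before the proposition ($U$ compact, $c$ and $\tilde V$ continuous), and $x_{j+1}\in D_C\subset S$ follows from the step estimate as above, so this is handled by the same induction. One should also remark that $f$ maps into $\Omega$ by hypothesis, so the trajectory never leaves the domain on which $\tilde V$ and $c$ are defined.
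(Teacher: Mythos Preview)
Your overall strategy---use the one-step identity relating $e(x_j)$ to $\tilde V(x_{j+1})-\tilde V(x_j)$, then induct/telescope---is exactly the paper's approach. However, the identity you write down is wrong. From the definition
\[
e(x) = \inf_{u\in U}\{c(x,u)+\tilde V(f(x,u))\}-\tilde V(x) = \tilde c(x)+\tilde V(f(x,\tilde u(x)))-\tilde V(x),
\]
one gets
\[
\tilde V(x_{j+1}) = \tilde V(x_j) - \tilde c(x_j) + e(x_j),
\]
not $\tilde V(x_{j+1})=\tilde V(x_j)+e(x_j)$ as you state. This error then propagates: your parenthesized term $e(x_j)+(1-\eta)c(x_j,\tilde u(x_j))$ does \emph{not} equal $e(x_j)-\eta\,c(x_j,\tilde u(x_j))$; the two differ by $c(x_j,\tilde u(x_j))$, which is exactly the missing $-\tilde c(x_j)$.

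Once you use the correct identity, the argument goes through in one line as in the paper: for $x_j\in D_C\subset R_\eta$,
\[
\tilde V(x_{j+1}) = \tilde V(x_j) - \tilde c(x_j) + e(x_j) \le \tilde V(x_j) - (1-\eta)\,\tilde c(x_j) \le \tilde V(x_j) < C,
\]
giving both $x_{j+1}\in D_C$ and the single-step decay estimate, after which your induction/telescoping is fine.
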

\begin{proof}
Since $e(x) = \tilde c(x) + \tilde V(f(x,\tilde u(x))) - \tilde V(x)$, we have for $x_j\in D_C\subset R_\eta$
\begin{align*}
\tilde V(x_{j+1}) & =  \tilde V(x_j) - \tilde c(x_j) + e(x_j) < \tilde V(x_j) < C
\end{align*}
thus $x_{j+1}\in D_C$, which shows that the closed loop trajectory stays in $D_C$. Further, 
\begin{align*}
\tilde V(x_{j+1}) & =  \tilde V(x_j) - c(x_j,\tilde u(x_j)) + e(x_j) \\
& \leq \tilde V(x_j) - (1 - \eta) c(x_j,\tilde u(x_j)).
\end{align*}
which shows the decay property.
\end{proof}

%
%

\section{Implementation and numerical experiments}
\label{sec:experiments}

\subsection*{Implementation}

A function $v\in\cW=\operatorspan(\psi_1,\ldots,\psi_n)$ is defined by the vector $\hat v=(\hat v_1,\ldots,\hat v_n)\in\R^n$ of its coefficients, i.e.\ $v = \sum_{i=1}^n \hat v_i \psi_i$. We can evaluate $v$ on an arbitrary set of points $Y=\{y_1,\ldots,y_e\}\subset \Omega$ by the matrix-vector product $A(Y)\hat v$, where $A(Y)$ is the $e\times n$-matrix with entries
$a_{ij} = \psi_j(y_i)$.

In order to compute $\hat v^{(k+1)}$ in the value iteration (\ref{eq:value iteration}), we need to evaluate $\Gamma[v^{(k)}]$ on $X$, i.e.\ we have to compute 
\[
\Gamma[v^{(k)}](x_i)=\left\{\begin{array}{ll}
\sup_{u \in U} \left\{e^{-c(x_i,u)} (\bar v^{(k)}(f(x_i,u)))\right\} & x_i\in X\backslash T,\\
1 & x_i\in X\cap T.
\end{array}\right.
\]
In general, this is a nonlinear optimization problem for each $x_i$.  For simplicity, we here choose to solve this by simple enumeration, i.e.\ we choose a finite set $\tilde U=\{u_1,\ldots,u_m\}$ of control values in $U$ and approximate 
\begin{equation}\label{eq:rhs}
\Gamma[v^{(k)}](x_i)\approx \max_{j=1,\ldots,m} \left\{e^{-c(x_i,u_j)} (\bar v^{(k)}(f(x_i,u_j)))\right\}
\end{equation}
for each $x_i\in X\backslash T$.  Let $Y'=\{f(x_i,u_j)\mid i=1,\ldots,n,j=1,\ldots,m\}$, then the values $v^{(k)}(f(x_i,u_j))$ are given by the matrix-vector product $A(Y')\hat v^{(k)}$. From this, the right hand side of (\ref{eq:rhs}) can readily be computed.  The \textsc{Matlab} codes for the following numerical experiments can be downloaded from the homepages of the authors.

In some cases, our numerical examples are given by restrictions of problems on $\R^s$ to a compact domain $\Omega \in \R^s$. In general the dynamical system on $\R^s$ is a map $f_1: \R^s \times U \to \R^s$ which does not restrict to a map $f_2 := f_1|_{\Omega}: \Omega \times U \to \Omega$. This can be achieved by replacing $f_2$ with $f := \Pi \circ f_2$ where $\Pi$ is a Lipschitz-continuous projection of $\R^s$ into $\Omega$. In our numerical examples it turned out that there is no visible difference between using or not using the projection into $\Omega$. Consequently, we do not use it in our published matlab code in order to keep things simpler.

\subsection{A simple 1D example}

We begin with the simple one-dimensional system
\[
f(x,u) = a u x
\]
on $\Omega = [0,1], U = [-1, 1]$, with parameter $a = 0.8$ and cost function
$
c(x,u) = ax.
$
Apparently, the optimal feedback is $u(x) = -1$, yielding  the optimal value function $V(x) = x$.  We choose equidistant sets $X_k=\{0,1/k,\ldots,1-1/k,1\}$ of nodes, $\tilde U=\{-1,-0.9,\ldots,0.9,1\}$, $T=[0,1/(2k)]$ and use the Wendland function $\phi^\sigma (r) = \max\{ 0, (1-\sigma r)^4 (4\sigma r+1) \}$ as shape function with parameter $\sigma = k/5$. In Figure~\ref{fig:1Derror}, we show the $L^\infty$-error of the approximate value function $\tilde V_k$ in dependence on the fill distance $h_k=1/k$ of the set of nodes $X_k$.  As predicted by Theorem \ref{ConvergenceOfShepardApproximants}, we observe a linear decay of the error in $h_k$. 

\begin{figure}[h]
  \centering
  \includegraphics[width=0.6\textwidth]{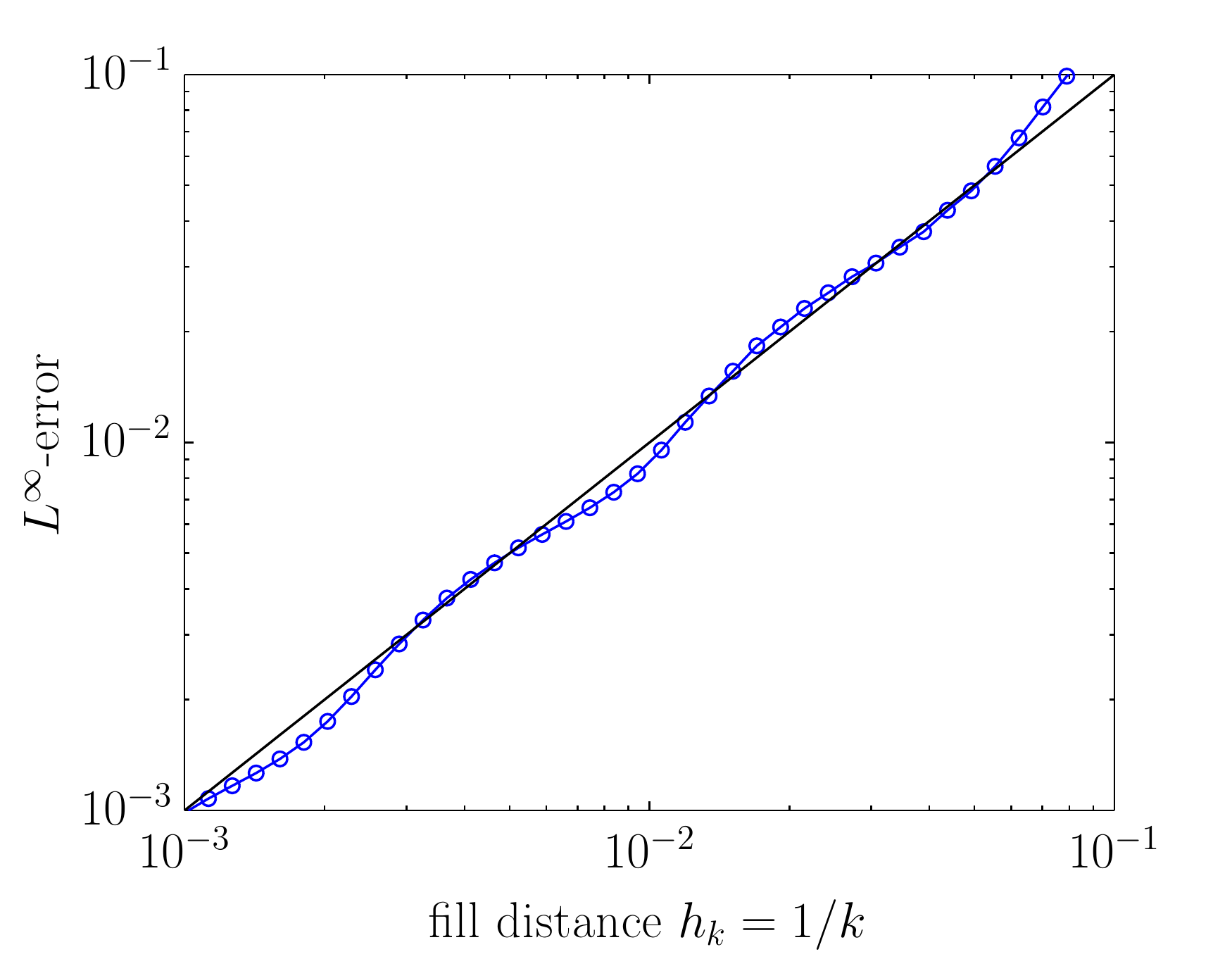}
  \caption{
$L^\infty$-error of the approximate value function $V_k=-\log(\tilde v_k)$ in dependence on the fill distance $1/k$, where $k$ is the number of nodes.}
  \label{fig:1Derror}
\end{figure}

\subsection{Example: shortest path with obstacles}

Our next example is supposed to demonstrate that state space constraints can trivially be dealt with, even if they are very irregular: We consider a boat in the mediteranian sea surrounding Greece (cf.~Fig.~\ref{fig:short}) which moves with constant speed $1$.  The boat is supposed to reach the harbour of Athens in shortest time. Accordingly, the dynamics is simply given by
\[
f(x,u) = x + hu,
\]
where we choose the time step $h=0.1$, with $U := \{ u \in \R^2 : \|u\| = 1 \}$, and the associated cost function by
$
c(x,u) \equiv 1.
$
In other words, we are solving a shortest path problem on a domain with obstacles with complicated shape. 

In order to solve this problem by our approach, we choose the set of nodes $X$ as those nodes of an equidistant grid which are placed in the mediteranean sea within  the rectangle shown in Fig.\ (\ref{fig:short}), which we normalize to $[-10,10]^2$.  We extracted this region from a pixmap of this region with resolution 275 by 257 pixels.  The resulting set $X$ consisted of $50301$ nodes. We choose $U=\{\exp(2\pi i j/20):j=0,\ldots,19\}$.  The position of Athens in our case is approximately given by $A=(-4, 4)$ and we choose $T=A+0.004\cdot[-1,1]^2$. We again use the Wendland function $\phi^\sigma (r) = \max\{ 0, (1-\sigma r)^4 (4\sigma r+1) \}$ as shape function with parameter $\sigma = 10$.

In Figure~(\ref{fig:short}), we show some isolines of the approximate optimal value function.  The computation took around 10 seconds on our $2.6$ GHz Intel Core i5.

\begin{figure}[h]
  \centering
  \includegraphics[width=0.66\textwidth]{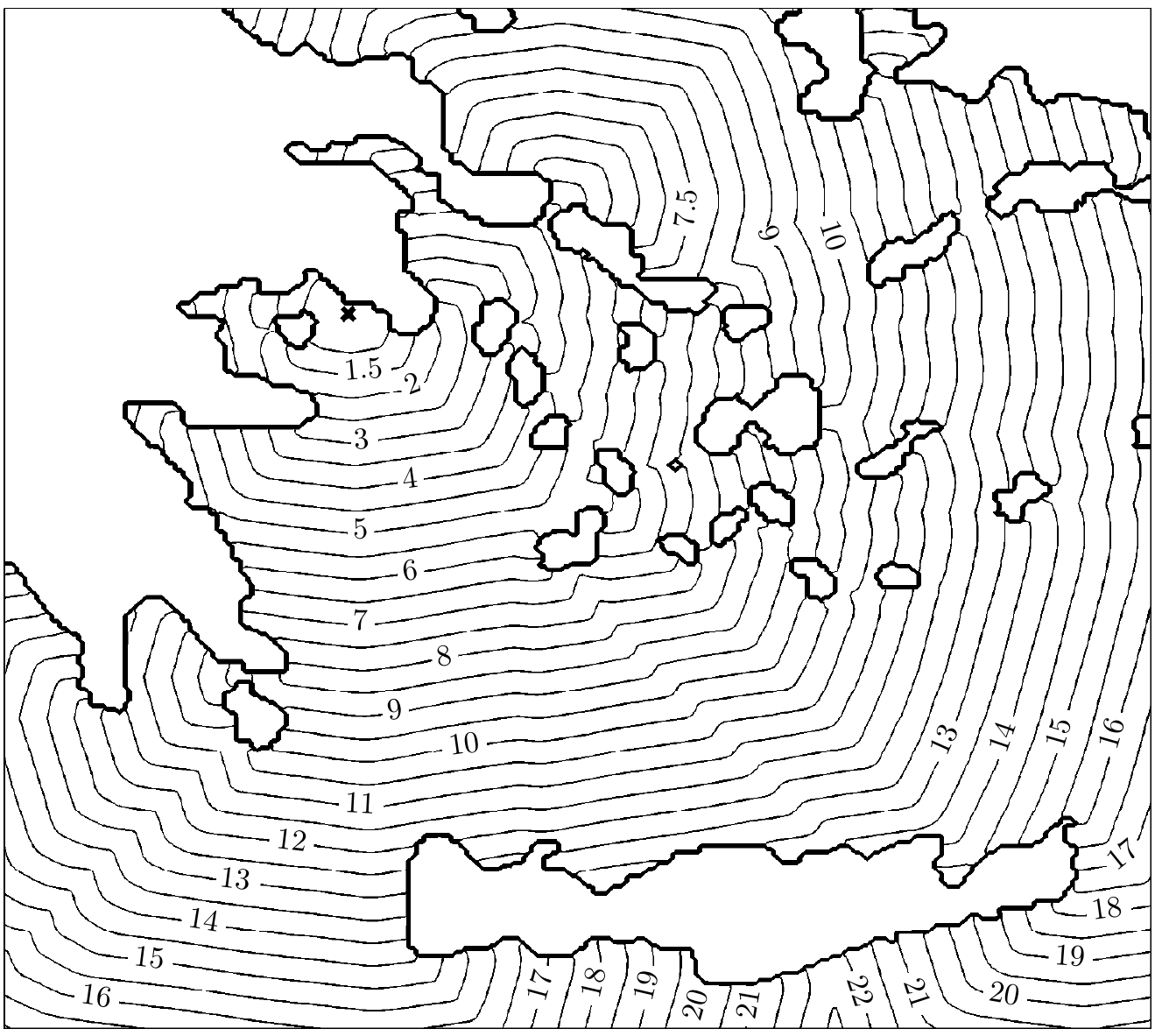}
  \caption{
Isolines of the optimal value function for the shortest path example.}
  \label{fig:short}
\end{figure}

\subsection{Example: an inverted pendulum on a cart}
\label{sec:pendulum}

Our next example is two-dimensional as well, with only box constraints on the states, but the stabilization task is more challenging:  We consider balancing a planar inverted pendulum on a cart that moves under an applied horizontal force, cf.\ \cite{JuOs04a} and Figure~\ref{fig:pendulum}.

\begin{figure}[htb]
  \centering
  \includegraphics[width=0.3\textwidth]{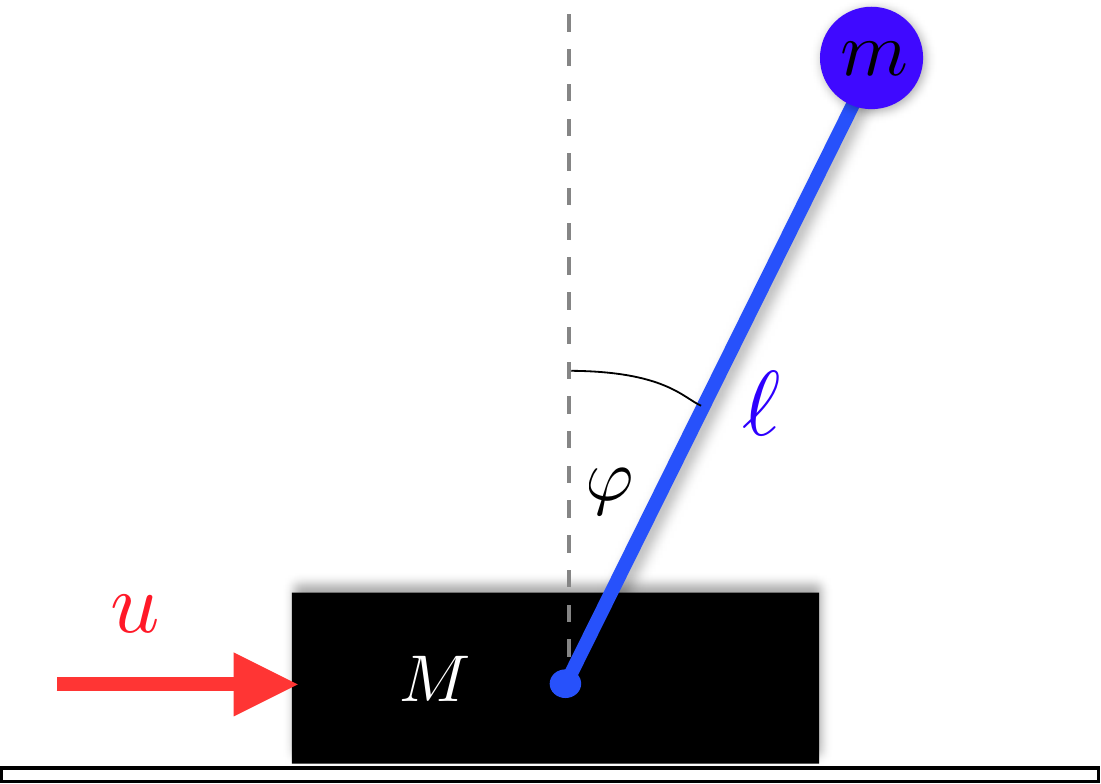}
  \caption{Model of the inverted pendulum on a cart.}
  \label{fig:pendulum}
\end{figure}

The configuration of the pendulum is given by the offset angle $\varphi$ from the vertical up position and we do not consider the motion of the cart.  Correspondingly, the state of the system is $x=(x_1,x_2):=(\varphi, \dot{\varphi}) \in \R^2$.  The equation of motion becomes
\begin{equation}
\label{eq:pend}
m_r \, \cos^2 (\varphi)\,\ddot{\varphi}  - \frac{1}{2} \, m_r \, \sin (2 \varphi)\,\dot\varphi^2   + \frac{g}{\ell} \, \sin (\varphi) - \frac{4}{3} - \frac{m_r}{m \,\ell} \, \cos (\varphi) \, u = 0,
\end{equation}
where $M = 8\,\mbox{kg}$ is the mass of the cart, $m = 2\,\mbox{kg}$ the
 mass of the pendulum and $\ell = 0.5\,\mbox{m}$ is the distance of the center of mass from the pivot. We use $m_r = m / (m + M)$ for the mass ratio and $g = 9.8
\mbox{m}/\mbox{s}^2$ for the gravitational constant.  The stabilization
of the pendulum is subject to the cost
$c(x,u) = c((\varphi,\dot\varphi),u) = \frac{1}{2} (0.1 \varphi^2 + 0.05 \dot\varphi^2 + 0.01 u^2)$.
For our computations, we need to obtain a discrete-time control system. To this end, we consider the time sampled system with sampling period $h$ and keep the control $u(t)$ constant during the sampling period.  In the computations, we used   $h = 0.1$ and computed the time sampling map via one explicit Euler step. 
As in \cite{JuOs04a}, we choose $\Omega = [-8, 8] \times [-10,10]$ as the region of interest and the set $X$ of nodes by a tensor product grid, using $100$ equally spaced points in each coordinate direction (cf.\ the Matlab code in the next section), and $\tilde U=\{-128,120,\ldots,120,128\}$.  We again use the Wendland function $\phi^\sigma (r) = \max\{ 0, (1-\sigma r)^4 (4\sigma r+1) \}$ as shape function, the shape parameter $\sigma$ is chosen such that the support of each $\varphi_i$ overlaps with the supports of roughly $20$ other $\varphi_i$'s, i.e.\ $\sigma\approx 2.22$ here.

\begin{figure}[H]
  \centering
  \includegraphics[width=0.55\textwidth]{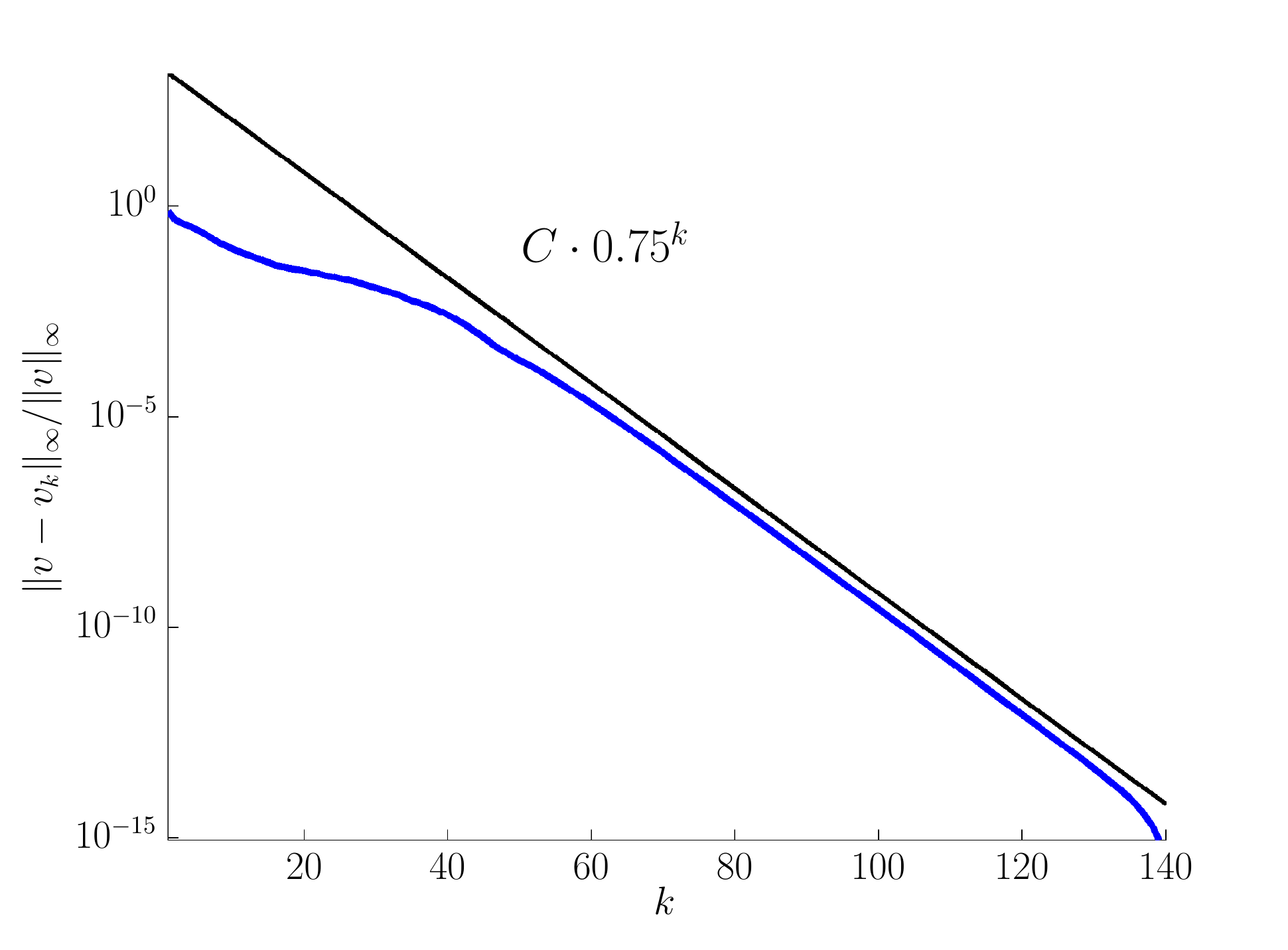}
  \caption{Relative $L^\infty$-error of $\tilde v_k$ in dependence on the number $k$ of iterations in the fixed point iteration (\ref{eq:value iteration}).  Here, we used the iterate $v_{149}$ as an approximation to the true fixed point $v$.}
  \label{fig:error}
\end{figure}

In Figure \ref{fig:error}, we show the behavior of the (relative) $L^\infty$-approximation error during the value iteration. We observe geometric convergence as expected by Theorem~\ref{thm:value iteration}. The computation of the optimal value function took 6.8 seconds. 

 In Figure~\ref{fig:value}, some isolines of the approximate value function are shown, together with the complement of the set $R_1$ (cf.\ Proposition~\ref{prop:decay}) as well as the first 80 points of a trajectory of the closed loop system with the optimal feedback.  At the 30th iteration, the trajectory leaves the set $R_1$ and subsequently moves away from the target.

\begin{figure}[htb]
  \centering
  \includegraphics[width=0.9\textwidth]{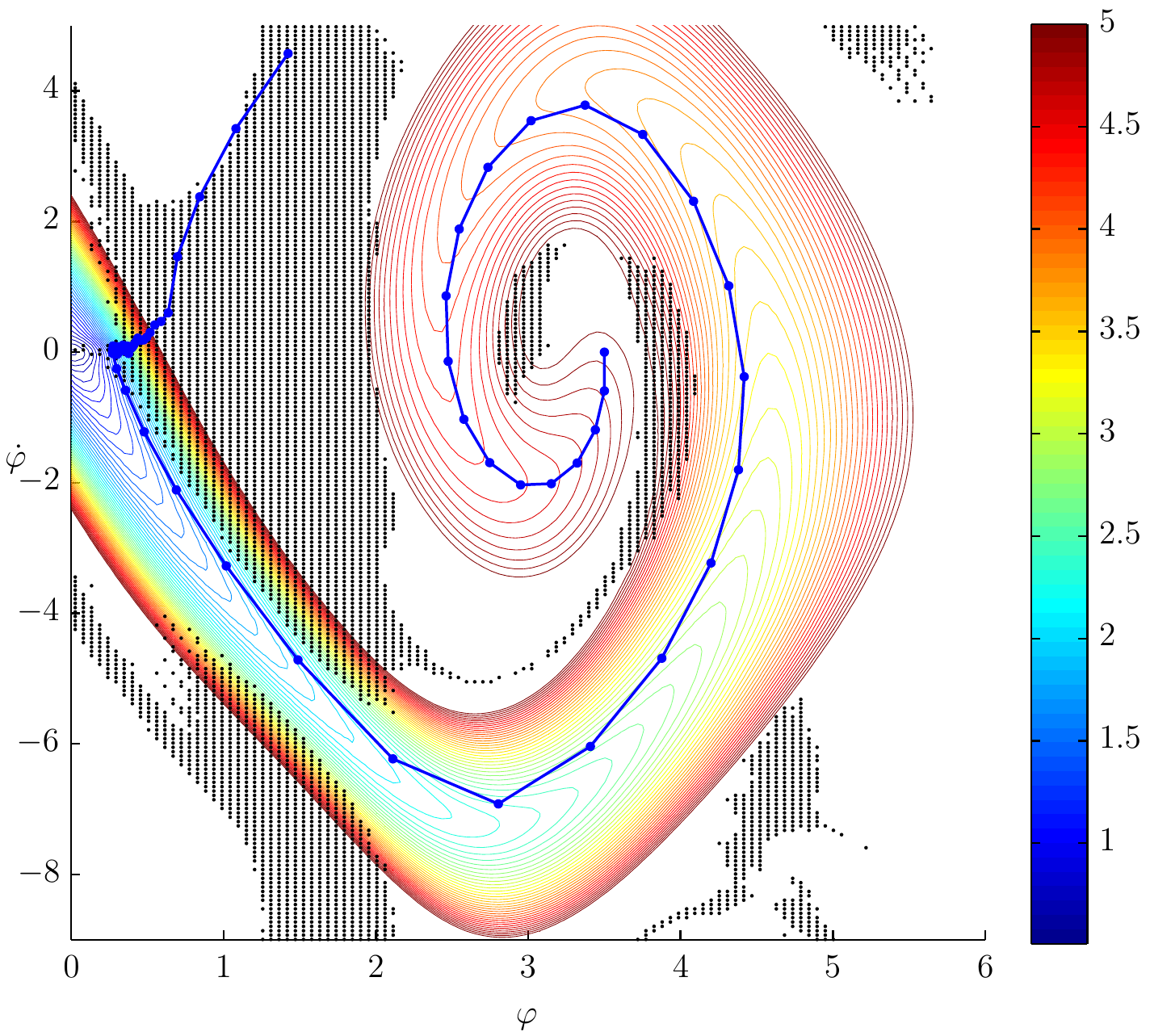}
  \caption{Approximate optimal value function $\tilde V=-\log(\tilde v(\cdot))$ (isolines color coded) for the planar inverted pendulum, together with the set $\Omega\backslash R_1$, where the Bellman residual $e$ is larger than $\tilde c$ (black dots) and the trajectory of the closed loop system starting at the initial value $(3.5,0)$ (blue). Close to the target, this trajectory leaves $R_1$ and subsequently moves away from target.}
  \label{fig:value}
\end{figure}

In Figure~\ref{fig:error_meshsize}, the behavior of the $L^\infty$ error of the approximate optimal value function in dependence of the fill distance $h$ is shown.  Here, we used the value function with fill distance $h=0.02$ as an approximation to the true one. Again, the convergence behavior is consistent with Theorem~\ref{ConvergenceOfShepardApproximants} which predicts a linear decay of the error. The corresponding Matlab code is given in Code~1.

\begin{figure}[t]
  \centering
  \includegraphics[width=0.6\textwidth]{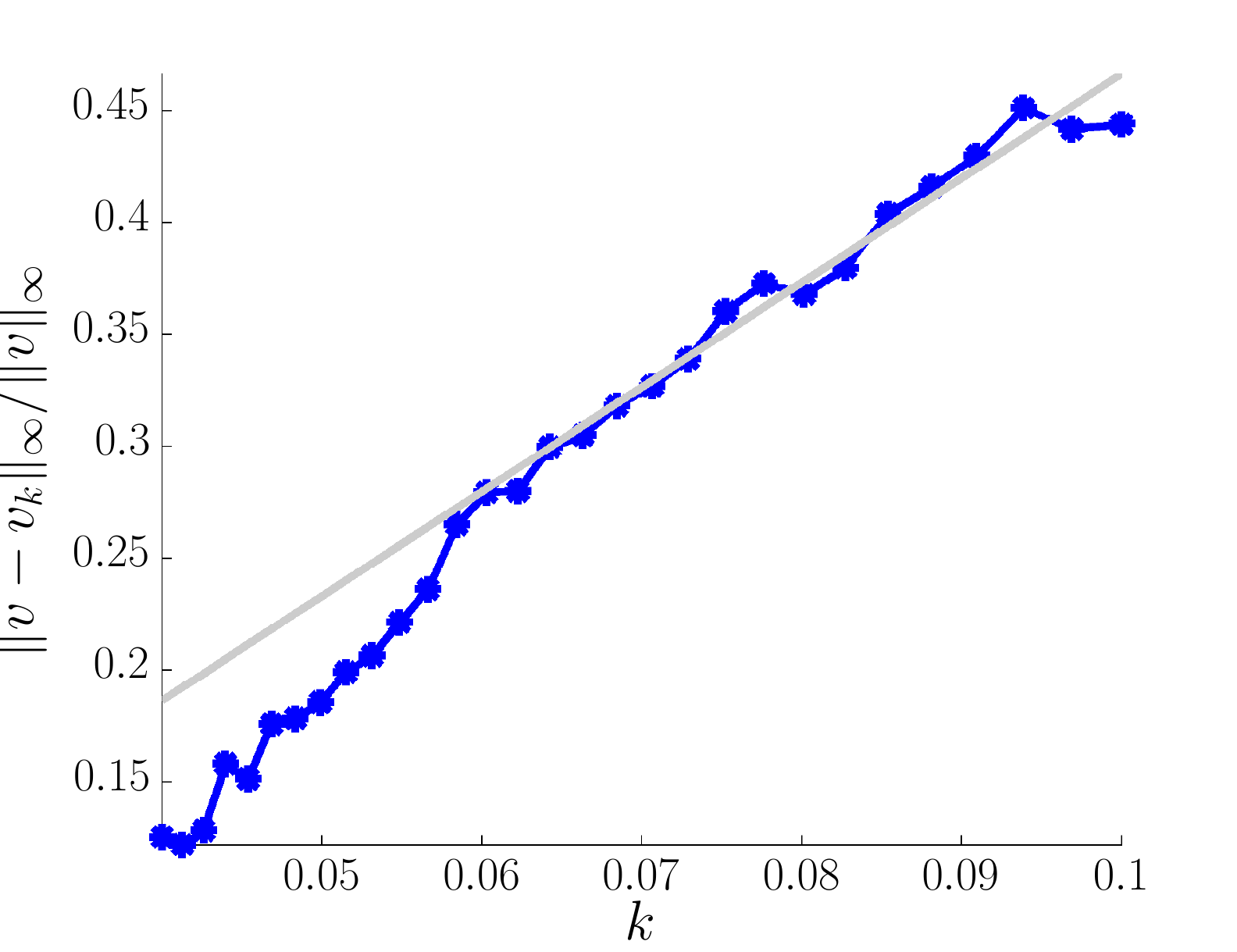}
  \caption{Relative $L^\infty$-error of the approximate optimal value function $v_k$ for the inverted pendulum in dependence on the fill distance of the nodes. Here, we used the value function for fill distance $0.02$ as an approximation to the true one.}
  \label{fig:error_meshsize}
\end{figure}

\begin{Code}[H]
\lstinputlisting[breaklines=true,linerange={4-26}]{pendulum.m}
\caption{Matlab code for the inverted pendulum example. Here, \texttt{A = sdistm(X,Y,r)} is the sparse matrix of pairwise euclidean distances between the points in the rows of $X$ and $Y$ not exceeding distance $r$. An implementation can be downloaded from the homepages of the authors.
}
\end{Code}

\newpage
\subsection{Example: magnetic wheel}

We finally turn to an example with a three dimensional state space, the stabilization  of a \emph{magnetic wheel}, used in magnetic levitation trains, cf. \cite{GoMeMu79} and Figure \ref{fig:wheelfigure}.

\begin{figure}[H]
  \includegraphics[width=0.45\textwidth]{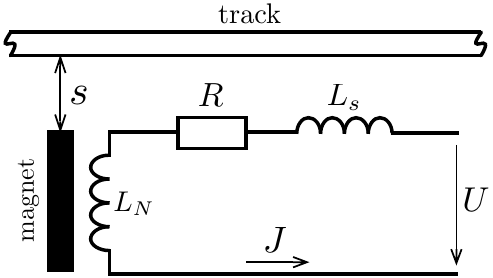}
\caption{Model of the magnetic wheel.}
\label{fig:wheelfigure}
\end{figure}

A point in state space is given by the gap
$s$ between the magnet and the track, its change rate $\dot s = v$ and the electrical current $J$ through the magnet.  The input is the voltage $U$ applied to the circuit. The dynamics is given by
\begin{align*}
\dot s &= v , \\
 \dot v &= \frac {CJ^2} {m_m4s^2}  -  \mu g , \\
\dot J &=  \frac1{L_s + \frac C{2s} } (- R J + \frac {C} {2s^2} Jv +   U) ,
\end{align*}
where $ C = L_N2s_0 $, the target gap $s_0 = 0.01$, the inductance $L_N = 1$ of the magnet, the magnet mass $m_m = 500$, the ratio of the total mass and the magnet mass $\mu = 3$, the resistance $R = 4$, the leakage inductance $L_s = 0.15$ and the 
gravitational constant $g = 9.81$.  We consider the cost function
\begin{equation}\label{eq:costcontM}
        c(x,u) = \frac{1}{2} (100 s^2 + v^2 + 0.002 u^2).
\end{equation}
The model has an unstable equilibrium at approximately $x_0:=(s_0,v_0,J_0) =(0.01$, $0, 17.155)$ which, again, we would like to stabilize by an optimal feedback.  We choose $\Omega = [0,0.02]\times [-4,4] \times [J_0-80,J_0+80]$ as state space, $\tilde U=\{6\cdot 10^3u^3\mid u\in \{-1,-0.99,\ldots,0.99,1\}\}$ as the set of controls, an equidistant grid $X$ of $30\times 30\times 30$ nodes in $\Omega$, the Wendland function $\phi^\sigma (r) = \max\{ 0, (1-\sigma r)^4 (4\sigma r+1) \}$ as shape function with shape parameter $\sigma=11.2$, such that the support of each $\varphi_i$ overlaps with the supports of roughly $10$ other $\varphi_i$'s.  The computation of the value function takes around 60 seconds.  In figure \ref{fig:magnet} we show a subset of the stabilizable subset of $\Omega$, i.e.\ we show the set $\{x\in \Omega\mid \tilde v(x) > 10^{-20}\}$.

\begin{figure}
  \centering
  \includegraphics[width=0.49\textwidth]{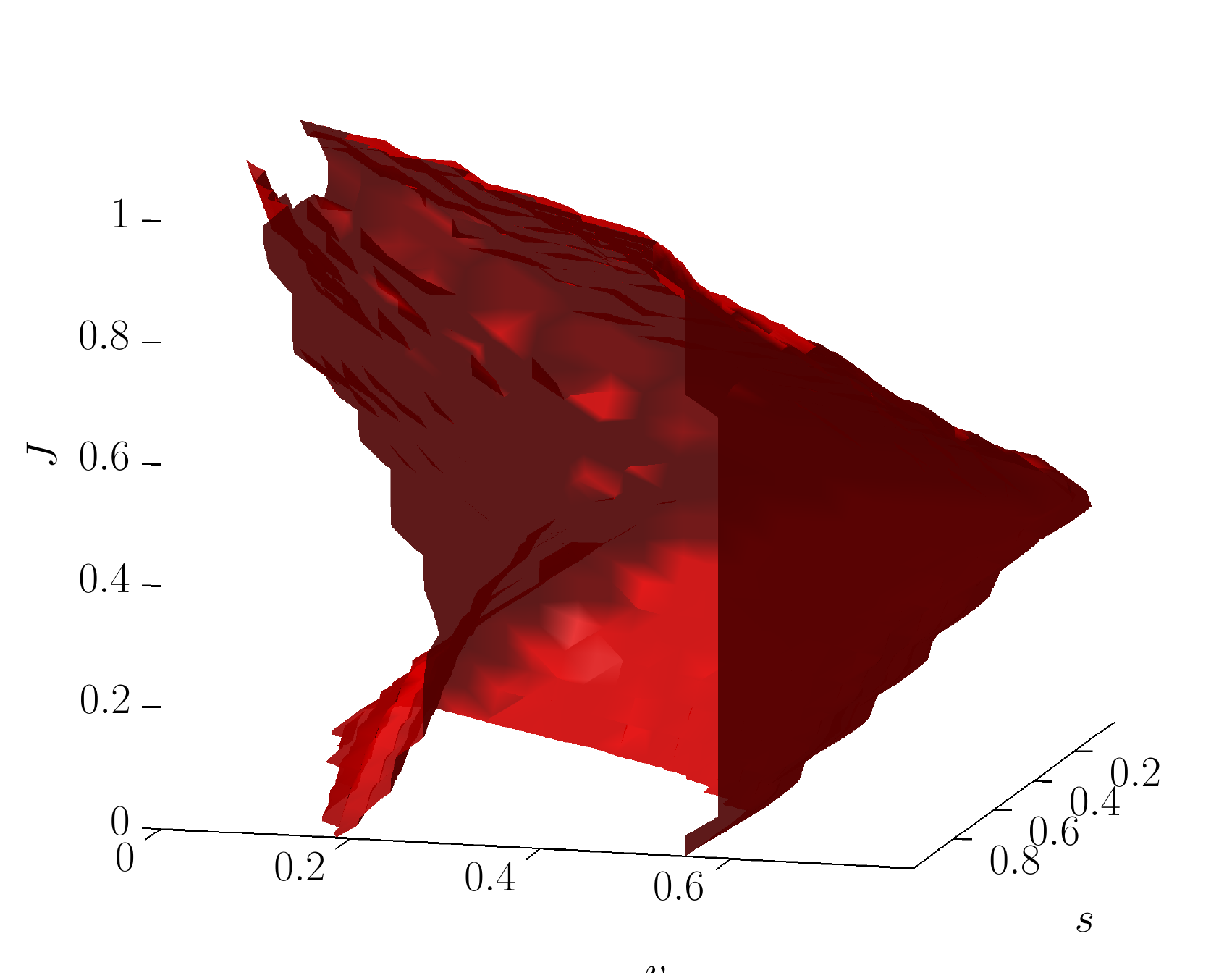} 
 \includegraphics[width=0.49\textwidth]{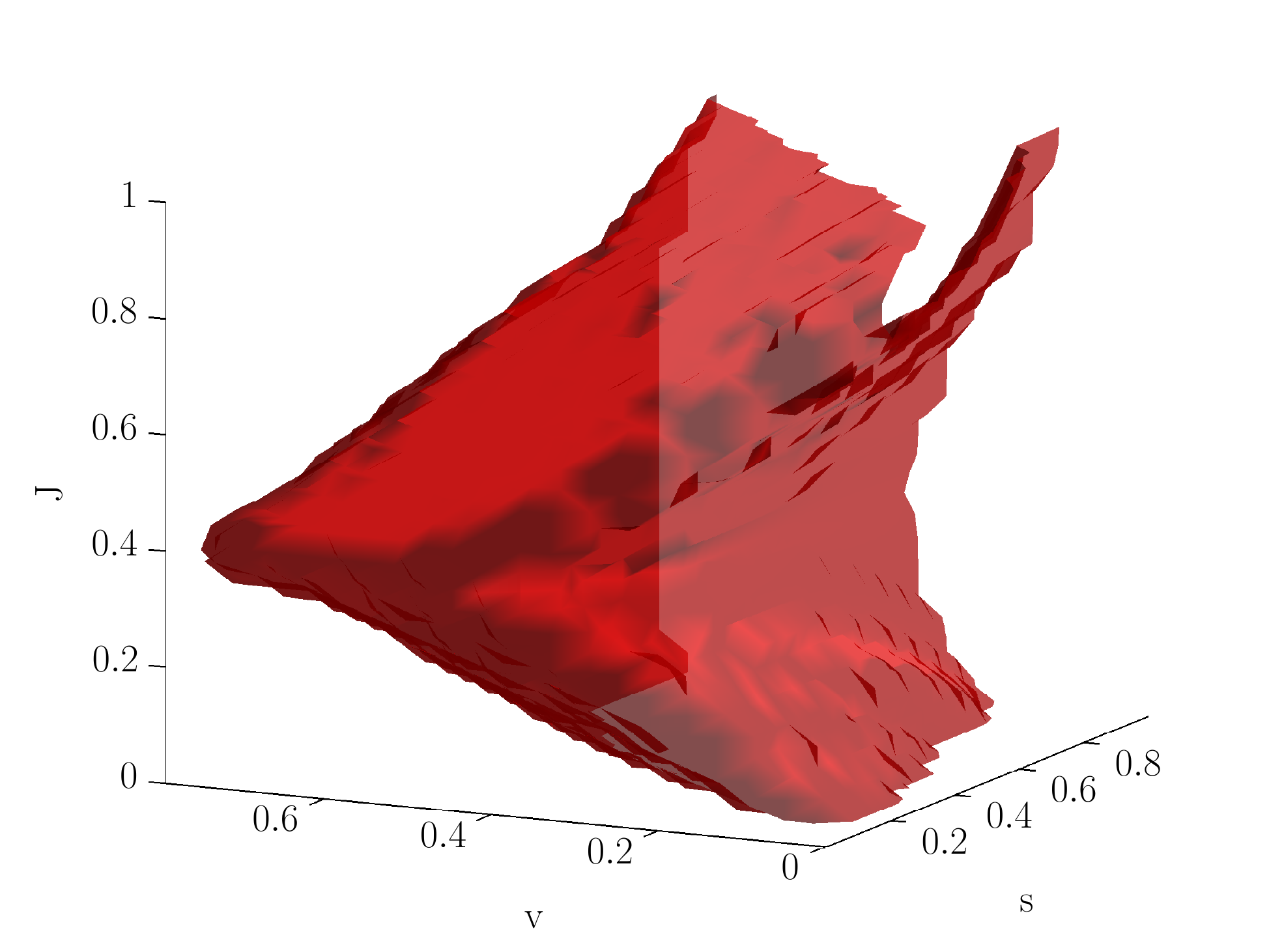}
\caption{Magnetic wheel example: The set $\{x\in \Omega\mid \tilde v(x) > 10^{-20}\}$ from two perspectives.}
\label{fig:magnet}
\end{figure}

\section{Future directions}

There are many ways in which the approach from this paper can be extended.  For example, it is natural not to fix the shape parameter $\sigma$ at the same value for all basis functions $\varphi_i$.  One could, e.g., either try to choose $\sigma$ (greedily) in an optimal way for each $\varphi_i$ or to implement a multilevel type scheme which works with a scale of values for $\sigma$.  This also raises the issue of an improved or even optimal choice for the set $X$ of nodes (instead of the equidistant grid used here).  One drawback of the value iteration used here is that the images of all possible node/control value pairs have to be computed and the corresponding matrix entries have to be stored.  It would be nice to have some sort of ``fast marching'' type algorithm which needs this reachability information only locally. Finally, the approximation space that we use here is rather smooth -- in contrast to the value function which in general is only Lipschitz-
continuous. If one considers a \emph{relaxed} version of the optimality principle, e.g.\ in the sense of \cite{LiRa06a}, smoother solutions might exist which can be approximated with higher efficiency.

\begin{appendix}

\section{Lipschitz continuity of $V$}
\label{LipschitzV}

We assume that a discrete time control problem is given that is stabilizable on all of $\Omega$, i.e. that $S = \Omega$. Furthermore, we require that $f \in C^1(\Omega \times U , \Omega)$, $c \in C^2(\Omega \times U , [0,\infty))$, that $f$
is Lipschitz continuous w.r.t. $x$ with Lipschitz constant $L_f$ and $c$ is Lipschitz continuous w.r.t. $x$ resp. $u$ with Lipschitz constants $L_c$ and $L_u$, resp., and that there is a feedback so that the closed-loop system has 0 as an asymptotically stable fixed point.

The idea of the proof of Lipschitz continuity of $V$ is that the number of time steps needed to steer an arbitrary starting point into a neighborhood of the equilibrium point is bounded. So the proof will consist of two parts, namely 
\begin{enumerate}
\item
finding a neighborhood of the equilibrium point where $V$ is Lipschitz continuous and
\item
using the Lipschitz constants of $f$ and $c$ and the previously mentioned bound on the number of steps needed to control arbitrary initial points into the neighborhood of the equilibrium point and extending the proof of Lipschitz continuity from the neighborhood to the whole state space.
\end{enumerate}


\subsection{Local Lipschitz continuity}

In the following, we will consider the approximation by the following LQR system:
\[
\bar f(x,u) = Ax + Bu, \;\;\; \bar c(x,u) = x^T Q x + u^T R u
\]
where
\[
A = \frac{\partial}{\partial x} f_{(0 , 0)}, \;
B = \frac{\partial}{\partial u} f_{(0 , 0)}, \;
Q = \frac{\partial^2}{\partial x^2} c_{(0 , 0)}, \;
R = \frac{\partial^2}{\partial u^2} c_{(0 , 0)}.
\]
Its optimal feedback is given by a linear map $\bar F(x)= \bar Fx$, where $\bar F$ is a $d \times n$ matrix, see \cite{So98}.
The optimal feedback $\bar F$ for the LQR system is not an optimal feedback for the original system, but a locally stabilizing one; let $\bar V$ be the derived (not optimal) value function of this feedback $\bar F$ for the original system.
Consider the matrix-valued map
\[
M(x,u) = f_x(x,u) + f_u(x,u) \cdot \bar F.
\]
One has for the spectral radius $\rho(M(0,0))<1$ because the original system has 0 as an asymptotically stable fixed point. So there is a norm $\| \cdot \|_a$ on $\R^s$ and an $\eps_a>0$ with $\|M(0,0)\|_a < 1-2 \eps_a$.
Choose open neighborhoods $0 \in \Omega_1 \subset \Omega$, $0 \in U_1 \subset U$ s.t.
\begin{equation}
\label{eq:epsclose}
\|M(x,u)\|_a  \leq 1- \eps_a.
\end{equation}
for $(x,u) \in \Omega_1 \times U_1$.

We choose even smaller open sets $0 \in \Omega_2 \subset \Omega_1, 0 \in U_2 \subset U_1$ by further requiring
$\Omega_2$ to be a sublevel set relative to $\|\cdot\|_a$, $\bar F x \in U_2$ for all $x \in \Omega_2$ and $u + \bar F(\tilde x -x) \in U_1$ for all $x,\tilde x \in \Omega_2, u \in U_2$.

\begin{lemma}
\label{contraction}
Let $(x,u) \in (\Omega_2 \times U_2)$.
Then 
\[
\phi: \Omega_2 \rightarrow \Omega, \quad 
\phi(\tilde x):= f(\tilde x, u+\bar F(\tilde x-x)). \]
is a pseudo-contraction relative to $\|\cdot\|_a$.
\end{lemma}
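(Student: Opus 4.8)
The plan is to establish that $\phi$ is Lipschitz continuous on $\Omega_2$ with constant $1-\eps_a<1$ with respect to $\|\cdot\|_a$; since $\phi$ sends $\Omega_2$ into $\Omega$ and need not be a self-map, this is exactly the sense in which it is a pseudo-contraction (and, specializing to $x=0$, $u=0$, it contracts toward the equilibrium $0=\phi(0)$).

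First I would record that $\phi$ is $C^1$ on the open set $\Omega_2$, being a composition of the $C^1$ map $f$ with the affine map $\tilde x\mapsto(\tilde x,\,u+\bar F(\tilde x-x))$, and that by the chain rule its derivative is
\[
D\phi(\tilde x) = f_x\bigl(\tilde x,\,u+\bar F(\tilde x-x)\bigr) + f_u\bigl(\tilde x,\,u+\bar F(\tilde x-x)\bigr)\,\bar F = M\bigl(\tilde x,\,u+\bar F(\tilde x-x)\bigr).
\]
I would also note that $\Omega_2$ is convex, having been chosen as a sublevel set of $\|\cdot\|_a$. Then, for $\tilde x,\tilde x'\in\Omega_2$, set $\gamma(t):=(1-t)\tilde x'+t\tilde x$, which stays in $\Omega_2$ for $t\in[0,1]$ by convexity, and apply the fundamental theorem of calculus:
\[
\phi(\tilde x)-\phi(\tilde x') = \int_0^1 \frac{d}{dt}\,\phi(\gamma(t))\,dt = \int_0^1 M\bigl(\gamma(t),\,u+\bar F(\gamma(t)-x)\bigr)\,(\tilde x-\tilde x')\,dt.
\]
The crucial observation is that the argument of $M$ in the integrand never leaves $\Omega_1\times U_1$: we have $\gamma(t)\in\Omega_2\subset\Omega_1$, and, using the defining property of the pair $(\Omega_2,U_2)$ with the role of ``$\tilde x$'' played by $\gamma(t)\in\Omega_2$, we get $u+\bar F(\gamma(t)-x)\in U_1$. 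Hence (\ref{eq:epsclose}) applies pointwise in $t$, giving $\|M(\gamma(t),u+\bar F(\gamma(t)-x))\|_a\le 1-\eps_a$, and estimating the integral in $\|\cdot\|_a$ yields $\|\phi(\tilde x)-\phi(\tilde x')\|_a\le(1-\eps_a)\|\tilde x-\tilde x'\|_a$, as desired.

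The main thing to be careful about is the domain bookkeeping: one must ensure that along the entire segment from $\tilde x'$ to $\tilde x$ the pair $(\gamma(t),u+\bar F(\gamma(t)-x))$ remains in the region $\Omega_1\times U_1$ where the bound (\ref{eq:epsclose}) on $\|M\|_a$ is available. This is precisely why $\Omega_2$ was taken to be a (convex) sublevel set and why the nested neighborhoods were shrunk so that ``$u+\bar F(\tilde x-x)\in U_1$ for all $x,\tilde x\in\Omega_2$, $u\in U_2$'' holds; once that is in place, the rest is the chain rule together with the integral form of the mean value inequality, and no constant is lost.
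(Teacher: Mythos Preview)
Your proposal is correct and follows essentially the same route as the paper: compute $D\phi(\tilde x)=M(\tilde x,\,u+\bar F(\tilde x-x))$, invoke the bound (\ref{eq:epsclose}), and conclude via a mean value argument. The paper simply says ``by the mean value theorem, $\phi$ is a contraction,'' whereas you spell out the integral form explicitly and verify that the segment $\gamma(t)$ stays in $\Omega_2$ (using that sublevel sets of $\|\cdot\|_a$ are convex) and that $u+\bar F(\gamma(t)-x)\in U_1$; this extra care is appropriate for a vector-valued map but does not change the argument.
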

We use the expression pseudo-contraction to indicate that $\phi$ does not map $\Omega_2$ to itself.
\begin{proof}
One has
\begin{align*}
\left\|
\frac
\partial
{\partial \tilde x}
\phi(\tilde x)
\right\|_a
& =
\|
f_x(\tilde x, u + \bar F(\tilde x-x)) +
f_u(\tilde x, u + \bar F(\tilde x-x)) \cdot \bar F
\|_a \\
& =
\| M(\tilde x, u + \bar F(\tilde x-x)) \|_a
\leq 
1- \eps_a
\end{align*}
by (\ref{eq:epsclose}) and, consequently, by the mean value theorem, $\phi$ is a contraction.
\end{proof}

By setting $(x,u) = (0,0)$ one gets the following corollary. Noting that $\Omega_2$ is a sublevel set relative to $\|\cdot\|_a$, this time the map has images in $\Omega_2$. 

\begin{corollary}
$f(\cdot, \bar F(\cdot))$ is a contraction on $\Omega_2$ relative to $\|\cdot\|_a$.
\end{corollary}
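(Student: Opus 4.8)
The plan is simply to specialize Lemma~\ref{contraction} to the base point $(x,u)=(0,0)$ and then to upgrade the resulting pseudo-contraction to a genuine self-map of $\Omega_2$. First I would note that $(0,0)\in\Omega_2\times U_2$, since by construction $0\in\Omega_2$ and $0\in U_2$, so Lemma~\ref{contraction} applies with this choice. The map it produces is
\[
\phi(\tilde x)=f\bigl(\tilde x,\;0+\bar F(\tilde x-0)\bigr)=f(\tilde x,\bar F\tilde x),
\]
i.e.\ exactly $f(\cdot,\bar F(\cdot))$, and the lemma gives $\|\phi(\tilde x)-\phi(\tilde y)\|_a\le(1-\eps_a)\|\tilde x-\tilde y\|_a$ for all $\tilde x,\tilde y\in\Omega_2$, which is the desired Lipschitz bound with constant $<1$.

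It then remains to check that $\phi$ maps $\Omega_2$ into itself, which is the one feature distinguishing a contraction from the pseudo-contraction of Lemma~\ref{contraction}. For this I would invoke that the origin is the closed-loop equilibrium, $f(0,0)=0$, hence $\phi(0)=0$. Consequently, for any $\tilde x\in\Omega_2$,
\[
\|\phi(\tilde x)\|_a=\|\phi(\tilde x)-\phi(0)\|_a\le(1-\eps_a)\|\tilde x\|_a<\|\tilde x\|_a .
\]
Since $\Omega_2$ was chosen to be a sublevel set relative to $\|\cdot\|_a$, the inequality $\|\phi(\tilde x)\|_a<\|\tilde x\|_a$ forces $\phi(\tilde x)\in\Omega_2$. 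Thus $f(\cdot,\bar F(\cdot))$ maps $\Omega_2$ into itself with Lipschitz constant $1-\eps_a<1$, proving the corollary.

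There is no genuine obstacle here; the only point that needs a moment's care is the passage from ``pseudo-contraction'' to ``contraction'', which rests entirely on the sublevel-set property of $\Omega_2$ together with $f(0,0)=0$. One should also verify the routine bookkeeping that $u=0$ indeed lies in $U_2$ and that the substitution $u+\bar F(\tilde x-x)=\bar F\tilde x$ is legitimate on all of $\Omega_2$ (so that the composed feedback values stay in the domain where Lemma~\ref{contraction} was established) — both are immediate from the construction of $\Omega_2$ and $U_2$ preceding that lemma.
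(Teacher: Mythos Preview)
Your proposal is correct and follows exactly the paper's own argument: the paper's proof of this corollary consists of the single remark that one sets $(x,u)=(0,0)$ in Lemma~\ref{contraction} and uses that $\Omega_2$ is a sublevel set for $\|\cdot\|_a$, so that the resulting map has images in $\Omega_2$. You have simply spelled out the details the paper leaves implicit, in particular the role of $f(0,0)=0$ in turning the pseudo-contraction into a self-map.
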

\begin{corollary}
$ \bar V$ is continuous on $\Omega_2$, even Lipschitz continuous.
\end{corollary}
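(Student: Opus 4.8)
The plan is to feed the two preceding corollaries into a geometric-series estimate. Write $\phi_0 := f(\cdot,\bar F(\cdot))$; by the corollaries $\phi_0$ maps $\Omega_2$ into itself, fixes the origin, and is a contraction on $\Omega_2$ with respect to $\|\cdot\|_a$ with some factor $\lambda := 1-\eps_a \in (0,1)$. First I would record the dynamical consequence: for every $x\in\Omega_2$ the closed-loop trajectory $x_0 = x$, $x_{k+1} = \phi_0(x_k)$, stays in $\Omega_2$ and satisfies $\|x_k\|_a \le \lambda^k \|x\|_a$, so it converges to $0$ geometrically.

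Next I would check that $\bar V$ is well defined and finite on $\Omega_2$. Set $g(y) := c(y, \bar F y)$; since $\bar F x \in U_2 \subset U$ for $x\in\Omega_2$, the map $g$ is defined and $C^2$ on the compact set $\bar\Omega_2$, hence Lipschitz there, say with constant $L_g$ relative to $\|\cdot\|_a$. As $g(0) = c(0,0) = 0$ (the running cost vanishes at the stabilized equilibrium, which is exactly the normalization underlying the LQR model $\bar c$), we get $g(y) = |g(y)-g(0)| \le L_g\|y\|_a$, and therefore
\[
\bar V(x) \;=\; \sum_{k=0}^\infty g(x_k) \;\le\; L_g\|x\|_a\sum_{k=0}^\infty \lambda^k \;=\; \frac{L_g}{1-\lambda}\,\|x\|_a \;<\; \infty .
\]

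For the Lipschitz estimate, let $x,\tilde x\in\Omega_2$ with closed-loop trajectories $x_k,\tilde x_k$. Iterating the contraction property of $\phi_0$ gives $\|x_k-\tilde x_k\|_a \le \lambda^k\|x-\tilde x\|_a$ for all $k$, whence
\[
|\bar V(x)-\bar V(\tilde x)| \;\le\; \sum_{k=0}^\infty |g(x_k)-g(\tilde x_k)| \;\le\; L_g\sum_{k=0}^\infty \|x_k-\tilde x_k\|_a \;\le\; \frac{L_g}{1-\lambda}\,\|x-\tilde x\|_a .
\]
Passing from $\|\cdot\|_a$ to the Euclidean norm by equivalence of norms on $\R^s$ turns this into a Euclidean Lipschitz bound on $\Omega_2$, which in particular yields continuity.

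I expect the only points needing care to be (i) the well-definedness of $\bar V$, i.e.\ convergence of the closed-loop cost series, handled by the geometric decay of $\|x_k\|_a$ together with $g(0)=0$; (ii) the fact that every iterate remains in $\Omega_2$, so that the single-step contraction estimate may legitimately be chained, which is precisely the ``sublevel set'' property of $\Omega_2$ plus $\phi_0(0)=0$; and (iii) the harmless bookkeeping when switching between $\|\cdot\|_a$ and $\|\cdot\|_2$. No genuinely hard step is involved; all the substance is already contained in the two corollaries.
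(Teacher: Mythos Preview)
Your proof takes essentially the same approach as the paper's: both exploit that $\phi_0 = f(\cdot,\bar F(\cdot))$ is a contraction on $\Omega_2$ with factor $1-\eps_a$, deduce geometric decay of $\|x_k-\tilde x_k\|_a$, and sum against the Lipschitz constant of $g(y)=c(y,\bar F y)$ to obtain the constant $L_g/\eps_a$ (the paper writes $(L_a+L_{au}\|\bar F\|_a)/\eps_a$). Your version is slightly more complete in that you also verify finiteness of $\bar V$ via $g(0)=0$ and the geometric decay of $\|x_k\|_a$, which the paper leaves implicit.
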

\begin{proof}
$c$ was assumed to be Lipschitz continuous w.r.t. $x$ resp. $u$ with Lipschitz constant $L_c$ resp. $L_u$. By equivalence of norms, $c$ is also Lipschitz continuous w.r.t. $x$ resp. $u$ with Lipschitz constants $L_a$ and $L_{au}$ relative to $\|\cdot\|_a$.
Consequently, $x \mapsto c(x, \bar Fx)$ is Lipschitz continuous with Lipschitz constant $L_a + L_{au} \|\bar F\|_a$ relative to $\|\cdot\|_a$.

It follows from Lemma \ref{contraction} that $\bar V$ is Lipschitz continuous with Lipschitz constant $\frac {L_a + L_{au} \|\bar F\|_a}{\eps_a}$ relative to $\|\cdot\|_a$. This is seen by considering two points $x_0,\tilde x_0 \in \Omega_2$ and comparing their trajectories under the feedback $\bar F$. Their mutual distances relative to $\|\cdot\|_a$ develop at most like a geometric sequence with factor $1-\eps_a$.

\end{proof}


Now we choose an open neighborhood $0 \in \Omega_3 \subset \Omega_2$ 
according to the following lemma.

\begin{lemma}
\label{OptCtrU}
There is a neighborhood $0 \in \Omega_3 \subset \Omega_2$ s.t.
each optimal feedback (of the original system) on $\Omega_3$ lies in $U_2$.
\end{lemma}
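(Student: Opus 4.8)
The plan is to argue by contradiction, using compactness of $U$ together with the regularity of the suboptimal value function $\bar V$ established above. Suppose no neighborhood $0\in\Omega_3\subset\Omega_2$ has the stated property. Then there is a sequence $x_n\to 0$ in $\Omega_2$ and, for each $n$, an optimal control $u_n\in\argmin_{u\in U}\{c(x_n,u)+V(f(x_n,u))\}$ with $u_n\notin U_2$.

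First I would extract a running-cost bound from the optimality principle~(\ref{eq:DPP}): since $u_n$ attains the infimum at $x_n$, $c(x_n,u_n)+V(f(x_n,u_n))=V(x_n)$, and $V\ge 0$ everywhere gives $c(x_n,u_n)\le V(x_n)$. Comparing $V$ with the value function $\bar V$ of the feedback $\bar F$, optimality yields $V(x)\le\bar V(x)$ on $\Omega_2$, and by the corollary above $\bar V$ is Lipschitz continuous on $\Omega_2$ with $\bar V(0)=0$; the Bellman relation for $\bar F$ at $0$ then forces $c(0,0)=0$ as well. Hence $0\le c(x_n,u_n)\le V(x_n)\le\bar V(x_n)\to 0$. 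Now $U\setminus U_2$ is compact, so along a subsequence $u_n\to u_\infty\in U\setminus U_2$, in particular $u_\infty\ne 0$, and continuity of $c$ and $f$ gives $c(0,u_\infty)=\lim_n c(x_n,u_n)=0$. Since $c\ge 0$ and $c(0,0)=0$, the point $u=0$ minimizes $c(0,\cdot)$, so $\partial_u c_{(0,0)}=0$ and $R=\partial_u^2 c_{(0,0)}\succeq 0$; positive definiteness of $R$ (as required for the LQR design) makes $0$ a strict local minimizer of $c(0,\cdot)$, which contradicts $c(0,u_\infty)=0$ once $u_\infty$ is known to lie near $0$.

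The hard part will be precisely that last clause: the local ($C^2$) data at the equilibrium do not by themselves exclude a spurious zero of $c(0,\cdot)$ sitting far from the origin inside $U\setminus U_2$. The clean way to close the gap is to adopt the mild standing hypothesis, satisfied in all the numerical examples, that $c(0,\cdot)$ vanishes on $U$ only at $u=0$; alternatively, one can try to avoid it by a more careful analysis of the optimal trajectory issued from $x_n$, whose accumulated cost equals $V(x_n)\le\bar V(x_n)\to 0$ and which therefore must enter the target within a bounded number of steps, forcing the first optimal control back into $U_2$. A secondary, purely bookkeeping point is that $\bar V$ is finite and continuous on a neighborhood of $0$ and that $0$ is the fixed point of $f(\cdot,\bar F(\cdot))$; both are already contained in the corollaries preceding the lemma.
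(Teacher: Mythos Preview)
Your argument is correct once you adopt the standing hypothesis you yourself flag, namely that $c(0,u)=0$ forces $u=0$ (or the slightly stronger $c(x,u)>0$ for $u\neq 0$). With that in hand, your sequence $u_n\to u_\infty\in U\setminus U_2$ together with $c(0,u_\infty)=0$ already gives the contradiction, and the detour through $R\succ 0$ and strict local minimality is unnecessary. One small quibble: you deduce $c(0,0)=0$ from $\bar V(0)=0$, but the latter already presupposes the former (the closed-loop trajectory through $0$ is constant, so $\bar V(0)=\sum_k c(0,0)$); it is cleaner to take $c(0,0)=0$ as part of the equilibrium data.

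The paper's proof uses exactly the same extra hypothesis but in a direct rather than sequential way: from compactness of $\Omega\times(U\setminus U_2)$ and $c(x,u)>0$ for $u\neq 0$ one gets $\delta_2:=\min_{x\in\Omega,\,u\notin U_2}c(x,u)>0$; then, since $\bar V$ is continuous with $\bar V(0)=0$, one picks $\Omega_3$ with $\sup_{\Omega_3}\bar V<\delta_2$, whence $V\leq\bar V<\delta_2$ on $\Omega_3$, and any optimal control $u\notin U_2$ would already cost $c(x,u)\geq\delta_2>V(x)$, a contradiction. This is shorter and avoids subsequences, but it invokes the positivity of $c$ globally on $\Omega$ rather than only at $x=0$; your version gets by with the weaker pointwise assumption $c(0,\cdot)^{-1}(0)=\{0\}$. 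The alternative ``bounded number of steps'' route you sketch at the end is not needed here and would duplicate machinery that only appears later in the appendix.
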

\begin{proof}
Because of compactness of $U$ and $c(x,u) > 0$ for $u \neq 0$, one has 
\[
\min_{x \in \Omega, u \notin U_2} c(x,u) = \delta_2 >0.
\]
One can choose $\Omega_3$ s.t.
$\sup_{x \in \Omega_3} \bar V(x) <  \delta_2$,
consequently
\[
\sup_{x \in \Omega_3} V(x) \leq \sup_{x \in \Omega_3} \bar V(x) < \delta_2.
\]
Now, for points in $\Omega_3$, the optimal controls are in $U_2$.
\end{proof}

As an additional condition we require from $\Omega_3$ that $\bar \Omega_3 \subset \Omega_2$. So the Hausdorff distance $d(\Omega_2, \Omega_3)$ is positive.
We choose a neighborhood $0 \in \Omega_4 \subset \Omega_3$ s.t. optimal trajectories starting in $\Omega_4$ stay in $\Omega_3$. 
This is the case if $\sup_{\Omega_4} V \leq \sup_{\Omega_4} \bar V \leq \min_{x \notin \Omega_3, u \in U} c(x,u)$. The last term is positive because of compactness of $\Omega_3^c \times U$.

\begin{lemma}
There is a neighborhood $U_{2 \eps_1}(0) \subset \Omega$ of $0$ where $V$ is Lipschitz continuous.
\end{lemma}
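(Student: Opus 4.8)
\emph{Proof sketch.} I would run a trajectory--comparison (shadowing) argument, in the spirit of the proof of the corollary on local Lipschitz continuity of $\bar V$, but carried out with the optimal controls of the \emph{original} system. First I would fix $\eps_1>0$ so small that $U_{2\eps_1}(0)\subset\Omega_4$ and, moreover, that in the norm $\|\cdot\|_a$ any curve starting within distance $2\eps_1$ of an optimal trajectory remains inside $\Omega_2$ --- possible because $\bar\Omega_3\subset\Omega_2$, i.e.\ $\Omega_3$ lies well inside $\Omega_2$. Then, given $x_0,\tilde x_0\in U_{2\eps_1}(0)$, I would take an optimal (or, if no minimizer exists yet, a nearly optimal) control sequence $(u_k)$ for $x_0$ with trajectory $(x_k)$; by the defining property of $\Omega_4$ this trajectory stays in $\Omega_3$, so Lemma~\ref{OptCtrU} gives $u_k\in U_2$ for all $k$. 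The comparison controls for $\tilde x_0$ would be the feedback correction from Lemma~\ref{contraction}, namely $\tilde u_k:=u_k+\bar F(\tilde x_k-x_k)$ along $\tilde x_{k+1}=f(\tilde x_k,\tilde u_k)$.

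The heart of the argument is an induction on $k$. Since $x_k\in\Omega_3\subset\Omega_2$ and $u_k\in U_2$, the standing requirement on $\Omega_2,U_1,U_2$ forces $\tilde u_k\in U_1\subset U$ (so the comparison orbit is admissible), and since $\Omega_2$ is a $\|\cdot\|_a$--sublevel set, hence convex, the segment $[x_k,\tilde x_k]$ lies in $\Omega_2$; applying Lemma~\ref{contraction} with the pair $(x_k,u_k)$ together with the mean value theorem then yields $\|\tilde x_{k+1}-x_{k+1}\|_a\le(1-\eps_a)\|\tilde x_k-x_k\|_a$, hence $\|\tilde x_k-x_k\|_a\le(1-\eps_a)^k\|\tilde x_0-x_0\|_a$ and in particular $\tilde x_k\in\Omega_2$ throughout. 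Since $c$ is Lipschitz in $x$ and in $u$ relative to $\|\cdot\|_a$ (constants $L_a,L_{au}$ obtained from $L_c,L_u$ by equivalence of norms) and $\tilde u_k-u_k=\bar F(\tilde x_k-x_k)$, each term obeys $|c(\tilde x_k,\tilde u_k)-c(x_k,u_k)|\le(L_a+L_{au}\|\bar F\|_a)(1-\eps_a)^k\|\tilde x_0-x_0\|_a$. Summing over $k$ and comparing with the optimal cost $V(x_0)$ would give
\[
V(\tilde x_0)\le V(x_0)+\frac{L_a+L_{au}\|\bar F\|_a}{\eps_a}\,\|\tilde x_0-x_0\|_a ,
\]
exchanging the roles of $x_0$ and $\tilde x_0$ yields the two--sided bound, and a final use of equivalence of norms converts it into an ordinary Euclidean Lipschitz estimate for $V$ on $U_{2\eps_1}(0)$.

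The step I expect to need care is the interaction with the target set $T$: the reference trajectory $(x_k)$ is only defined up to its first visit to $T$, and the comparison trajectory need not be in $T$ at the same instant, so the sums above should either be understood in the infinite--horizon / cost--to--go formulation, or one must append a short tail in which $\tilde x_k$ is driven by the locally stabilizing feedback $\bar F$ (under which $\Omega_2$ is forward invariant and contracts to $0$). In that case the extra cost is at most $V(\tilde x_K)\le\bar V(\tilde x_K)=|\bar V(\tilde x_K)-\bar V(x_K)|\le L_{\bar V}\|\tilde x_K-x_K\|_a$, using $\bar V|_T=0$ and that $\bar V$ is finite and Lipschitz on $\Omega_2$ with constant $L_{\bar V}=(L_a+L_{au}\|\bar F\|_a)/\eps_a$ (the corollary above), which merely adds $L_{\bar V}$ to the Lipschitz constant. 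Everything else --- the induction keeping the comparison orbit inside $\Omega_2$ with controls in $U$, and the geometric summation --- is routine once Lemmas~\ref{contraction} and~\ref{OptCtrU} are in hand, so this $T$--bookkeeping is really the only obstacle.
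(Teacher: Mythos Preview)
Your proposal is essentially the paper's own argument: choose a nearly optimal control sequence $(u_k)$ for $x_0$ with trajectory in $\Omega_3$ (hence $u_k\in U_2$ by Lemma~\ref{OptCtrU}), shadow it from $\tilde x_0$ via the corrected controls $\tilde u_k=u_k+\bar F(\tilde x_k-x_k)$, apply Lemma~\ref{contraction} to get geometric decay $\|\tilde x_k-x_k\|_a\le(1-\eps_a)^k\|\tilde x_0-x_0\|_a$, sum the cost discrepancies, swap roles, and pass to the Euclidean norm. Two small remarks: in the appendix the problem is the infinite--horizon stabilization to the equilibrium $0$, so the trajectories never terminate in a target set and your $T$--bookkeeping paragraph is unnecessary; and the paper absorbs the ``nearly optimal'' slack by taking it equal to $(L_a+L_{au}\|\bar F\|_a)\eps_a^{-1}\|\tilde x_0-x_0\|_a$, which is why its final constant carries a factor $2$ that yours does not.
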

\begin{proof}

For given $x_0,\tilde x_0 \in \Omega_4$ with $\|x_0 - \tilde x_0\| < d(\Omega_2, \Omega_3)$ we choose $(u_k)$ as a nearly optimal control sequence for $x_0$:
$J(x_0, (u_k)) \leq V(x_0) + (L_a + L_{au} \|\bar F\|_a) \frac 1 {\eps_a} \| \tilde x_0-x_0 \|_a$ and such that $(x_k)$ stays in $\Omega_3$. This is possible because $\Omega_3$ is open.
This gives us a sequence $(x_k)$. 
For $\tilde x_k$ we define iteratively $\tilde x_k := f(\tilde x_{k-1} ,\tilde u_{k-1})$ with
\[
\tilde u_k := u_k+\bar F(\tilde x_k-x_k),
\]
i.e. a ``linear correction'' with the information we have from the LQR system.


Consider
\[
\phi_k(\tilde x):= f(\tilde x, u_k+\bar F(\tilde x-x_k)), \]
so by Lemma \ref{contraction}
\[
\|x_{k+1}- \tilde x_{k+1}\|_a 
\leq 
(1-\eps_a) \| x_k- \tilde x_k \|_a
\]
and iteratively on sees that $(\tilde x_k)$ stays in $\Omega_2$ because of the condition $\|x_0 - \tilde x_0\| < d(\Omega_2, \Omega_3)$.


Consequently,
\begin{align*}
|J(x_0,(u_k)) - J(\tilde x_0,(\tilde u_k))| 
&\leq
(L_a + L_{au} \|\bar F\|_a) \frac 1 {1-(1-\eps_a)}\| x_0 - \tilde x_0\|_a \\
& = (L_a + L_{au} \|\bar F\|_a) \frac 1 {\eps_a}\| x_0 - \tilde x_0\|_a
\end{align*}
and thus
\begin{align*}
V(\tilde x_0) & \leq  J(\tilde x_0,(\tilde u_k)) \leq J(x_0,(u_k))+ (L_a + L_{au} \|\bar F\|_a) \frac 1 {\eps_a}\| x_0 - \tilde x_0\|_a \\
& \leq   V(x_0)+ (L_a + L_{au} \|\bar F\|_a) \frac 2 {\eps_a}\| x_0 - \tilde x_0\|_a.
\end{align*}


Changing the roles of $x_0$ and $\tilde x_0$,
and noting that two norms like $\| \cdot \|$ and $\| \cdot \|_a$ on a finite-dimensional space are equivalent, 
we conclude 
\[
|V(x_0) - V(\tilde x_0)| \leq L_{\text{loc}}  \| x_0 - \tilde x_0\|
\]
for some $ L_{\text{loc}} >0$.
Choose $\eps_1>0$ with $U_{2 \eps_1}(0) \subset \Omega_4.$
\end{proof}

\subsection{Global Lipschitz continuity}

\begin{lemma}
$V$ is bounded on $\Omega$.
\end{lemma}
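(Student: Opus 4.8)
The plan is to bound $V$ uniformly by combining the local analysis already carried out with the assumed stabilizability on all of $\Omega$. First I would invoke the setup from the previous subsection: we have a neighborhood $U_{2\eps_1}(0)\subset\Omega_4$ on which $V$ is Lipschitz, hence (since $\overline{U_{\eps_1}(0)}$ is compact and $V$ continuous there) $V$ is bounded on $U_{\eps_1}(0)$, say $V\leq C_0$ on this neighborhood.

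Next I would use the hypothesis that the problem is stabilizable on all of $\Omega$, i.e.\ $S=\Omega$, so for every $x\in\Omega$ there is a control sequence steering $x$ into the target set. The key step is to show that the number of steps needed to steer an \emph{arbitrary} point of $\Omega$ into $U_{\eps_1}(0)$ is uniformly bounded. This should follow from a compactness argument: since $\Omega$ is compact and $f$ is continuous, and since for each $x\in\Omega$ there exists a finite control sequence and a time $N_x$ after which the trajectory enters the open set $U_{\eps_1}(0)$, one gets by continuity of $f$ (hence of finite compositions of $f(\cdot,u_k)$) a neighborhood of $x$ whose points are all steered into $U_{\eps_1}(0)$ within $N_x$ steps using the \emph{same} control sequence; finitely many such neighborhoods cover $\Omega$, yielding a global bound $N:=\max_x N_x$ on the number of steps and a uniform bound on the accumulated cost along those finite pieces, since $c$ is continuous on the compact set $\Omega\times U$ and hence bounded by some $c_{\max}$.

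Putting these together: for any $x\in\Omega$, concatenate the bounded-length control sequence steering $x$ into $U_{\eps_1}(0)$ (costing at most $N\,c_{\max}$) with a near-optimal continuation from the entry point (costing at most $C_0+1$, using the local bound on $V$ there). By the optimality principle this gives $V(x)\leq N\,c_{\max}+C_0+1=:C$, independent of $x$, which is the claim.

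The main obstacle is the uniform-step-count argument: one must be careful that the open set $U_{\eps_1}(0)$ being entered is genuinely open (so that the covering-by-neighborhoods argument applies) and that reusing a single control sequence on a whole neighborhood is legitimate — this is exactly where continuity of the finite iterates $x\mapsto f(f(\cdots f(x,u_0),\ldots),u_{N_x-1})$ is used. Everything else is a routine compactness-and-boundedness bookkeeping exercise. Note that this is precisely the ``bounded number of steps'' idea announced at the start of the Appendix, now made quantitative for the boundedness statement before being reused for the global Lipschitz estimate.
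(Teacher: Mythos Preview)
Your argument is correct and essentially coincides with the paper's proof: both use continuity of finite compositions of $f(\cdot,u_k)$ to steer a whole neighborhood of each point into the small ball around $0$ in a fixed number of steps, bound $V$ there via the local analysis (the paper uses $V\le\bar V$, you use the local Lipschitz bound), and then invoke compactness of $\Omega$ to patch finitely many such neighborhoods together. The only cosmetic difference is that you extract a uniform step count $N$ up front, whereas the paper bounds $V$ on each neighborhood separately and takes the maximum of finitely many local bounds; the paper also hedges by letting the neighbors land in $U_{2\eps_1}(0)$ rather than $U_{\eps_1}(0)$, which your openness remark renders unnecessary.
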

\begin{proof}
Let $x_0 \in \Omega$. Take a stabilizing control sequence $(u_k)_k$. So there is an $k_1$ such that $x_{k_1} \in U_{\eps_1}(0)$. Because of continuity of the system there is a neighborhood $N(x_0)$ such that each $\tilde x \in N(x_0)$ is steered to $U_{2 \eps_1}(0)$ in $k_1$ steps. On $U_{2 \eps_1}(0)$ we already know that $V$ is bounded since $V(x) \leq \bar V(x)$ on $\Omega_3 \supseteq U_{2 \eps_1}(0)$, so it is also bounded on $N(x_0)$ because $c$ is bounded. Now by compactness of $\Omega$, finitely many such sets $N(x_0)$ cover $\Omega$. So $V$ is also bounded on $\Omega$.
\end{proof}

Let $\Delta := \sup_\Omega V$. We assume $c$ to be bounded from below by $\delta_3 > 0$ outside of $U_{\eps_1}(0)$. 
Let $k_0$ be an integer with $k_0 \geq \frac \Delta {\delta_3}$ and $\varepsilon_0 = \eps_1 / L_f^{k_0}$.
The definition of $k_0$ is such that each optimal trajectory reaches $U_{\eps_1}(0)$ in at most $k_0$ steps.

From now on, let $F: \Omega_4 \rightarrow U$ be the (an) optimal feedback which exists on $\Omega_4$ because $V$ is continuous and so the right hand side of the Bellman equation depends continuously on $u \in U$.
\begin{lemma}
Let $x_0 \in \Omega$.
Then there is a neighborhood $x_0 \in A \subset \Omega$ and a constant $L_1 >0$ s.t.
 for any $\tilde x_0 \in A$ we have the following:
Let $(u_k)$ an almost optimal control sequence for $x_0$ (but not for $\tilde x_0$) in the sense that it steers $x_0$ in at most $k_0$ steps to $U_{\eps_1}$ and 
$(\tilde u_k) = (u_0, \dots, u_{k_0}, F(\tilde x_{k_0+1}), F(\tilde x_{k_0+2}), \dots).$
Then 
\[
|J(x_0,(u_k)) - J(\tilde x_0,(\tilde u_k))| \leq L_1 \| x_0 - \tilde x_0\|.
\]
\end{lemma}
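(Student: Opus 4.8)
The statement asserts a local Lipschitz-type comparison: fixing an almost optimal control for $x_0$ and then appending the optimal feedback $F$ once the trajectory of $\tilde x_0$ enters the region where $F$ is defined, the cost difference is controlled linearly in $\|x_0-\tilde x_0\|$. The plan is to split the two trajectories $(x_k)$ and $(\tilde x_k)$ into two phases: the first $k_0+1$ steps, where both are driven by the \emph{same} open-loop controls $u_0,\dots,u_{k_0}$, and the tail, where $\tilde x_{k_0+1}$ has landed in (a neighborhood of) $U_{\eps_1}(0)\subset\Omega_4$ and we use the previously established local Lipschitz continuity of $V$ together with the optimal feedback $F$.

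\textbf{Phase 1 (open-loop part).} First I would choose the neighborhood $A$ of $x_0$ small enough that for $\tilde x_0\in A$ the perturbed trajectory $(\tilde x_k)$ stays inside $\Omega$ for $k=0,\dots,k_0+1$ and ends up in $U_{2\eps_1}(0)$; this is possible by continuity of $f$ in $x$ and the choice $\eps_0=\eps_1/L_f^{k_0}$, since applying the Lipschitz bound $L_f$ in $x$ step by step gives $\|x_k-\tilde x_k\|\le L_f^k\|x_0-\tilde x_0\|$ for $k\le k_0$, so in particular $\|x_{k_0+1}-\tilde x_{k_0+1}\| \le L_f^{k_0+1}\|x_0-\tilde x_0\|$ and $\tilde x_{k_0+1}\in U_{2\eps_1}(0)$ once $\|x_0-\tilde x_0\|$ is small. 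Along these same $k_0+1$ steps the running costs differ by at most
\[
\sum_{k=0}^{k_0}\bigl|c(x_k,u_k)-c(\tilde x_k,u_k)\bigr|\le L_c\sum_{k=0}^{k_0}\|x_k-\tilde x_k\|\le L_c\Bigl(\sum_{k=0}^{k_0}L_f^{k}\Bigr)\|x_0-\tilde x_0\|,
\]
using that the controls are identical on this segment, so $L_u$ does not enter.

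\textbf{Phase 2 (closed-loop tail).} For $k>k_0$ both continuations use $F$, but starting from the different points $x_{k_0+1}$ and $\tilde x_{k_0+1}$. The tail cost of $(\tilde x_k)$ from $\tilde x_{k_0+1}$ under $F$ is exactly the value of the $F$-generated cost at $\tilde x_{k_0+1}$, which by the local Lipschitz continuity of $V$ (and the fact that $F$ is the optimal feedback on $\Omega_4$, together with the contraction estimates underlying that Lipschitz bound) is within $L_{\text{loc}}\|x_{k_0+1}-\tilde x_{k_0+1}\|$ of the corresponding tail cost from $x_{k_0+1}$; combining with the Phase-1 bound $\|x_{k_0+1}-\tilde x_{k_0+1}\|\le L_f^{k_0+1}\|x_0-\tilde x_0\|$, the tail difference is $\le L_{\text{loc}}L_f^{k_0+1}\|x_0-\tilde x_0\|$. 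Adding the Phase-1 and Phase-2 contributions yields the claim with $L_1 := L_c\sum_{k=0}^{k_0}L_f^k + L_{\text{loc}}L_f^{k_0+1}$, and $A$ the neighborhood selected above (intersected with $\{\tilde x_0:\|x_0-\tilde x_0\|<\eps_0\}$ and with the set ensuring $\tilde x_{k_0+1}\in\Omega_4$).

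\textbf{Main obstacle.} The delicate point is Phase 2: one must be careful that $\tilde x_{k_0+1}$ genuinely lies in the set $\Omega_4$ (or $U_{2\eps_1}(0)$) on which $F$ is defined and the local Lipschitz estimate applies, \emph{and} that applying $F$ from both $x_{k_0+1}$ and $\tilde x_{k_0+1}$ keeps both tail trajectories inside $\Omega_3$ so that the contraction argument from the local-continuity lemma is valid. This is where the quantitative choice of $\eps_0$ (and shrinking $A$) is essential — it is not merely a matter of continuity but of making the step-wise expansion factor $L_f^{k_0+1}$ small enough against the radius of the local Lipschitz neighborhood. Once that is arranged, the rest is the routine telescoping of costs sketched above.
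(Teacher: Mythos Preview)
Your argument is essentially the paper's: propagate $\|x_k-\tilde x_k\|\le L_f^{k}\|x_0-\tilde x_0\|$ through the shared open-loop phase, telescope the running-cost differences via $L_c$, and control the tail by the local Lipschitz constant $L_{\text{loc}}$ of $V$ on $U_{2\eps_1}(0)$, with the neighborhood $A$ determined by $\eps_0=\eps_1/L_f^{k_0}$; your constant $L_1=L_c\sum_{k=0}^{k_0}L_f^{k}+L_{\text{loc}}L_f^{k_0+1}$ matches the paper's up to an inessential index shift ($k_0$ versus $k_0+1$). The one slip is your claim that ``for $k>k_0$ both continuations use $F$'': the hypothesis does not say this about $(u_k)$, but the paper makes the identical identification by writing the tail of $(u_k)$ from $x_{k_0}$ as $V(x_{k_0})$, so this is evidently the intended reading of ``almost optimal'' and the argument goes through.
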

\begin{proof}
Again, we consider trajectories $(x_k), (\tilde x_k)$ of $f$ starting at $x_0, \tilde x_0$ with $d := \|x_0 - \tilde x_0\|< \varepsilon_0$ and control sequences $(u_k)$ resp.\ $(\tilde u_k)$. Note that only $(x_k)$ is an almost optimal trajectory.
By optimality of $(u_k)$ and the definition of $k_0$, one has $x_{k_0} \in U_{\eps_1}(0)$.

In addition, one has $\| x_{k_0} - \tilde x_{k_0} \| \leq \varepsilon_0 L_f^n = \eps_1$, so $\tilde x_{k_0} \in U_{2 \eps_1}(0)$. Now,

\begin{align*}
|J(x_0,(u_k)) - J(\tilde x_0,(\tilde u_k))| & \leq |c(x_0,u_0)-c(\tilde x_0,u_0)| + |c(x_1,u_1)-c(\tilde x_1,u_1)| \\
& + \ldots + |c(x_{k_0-1},u_{k_0-1})-c(\tilde x_{k_0-1}, u_{k_0-1})| + |V(x_{k_0}) - V(\tilde x_{k_0})| \\
& \leq L_c d + L_c d L_f + \ldots + L_c d L_f^{n-1} + d L_f^n L_{\text{loc}} \\
& =:   d L_1.
\end{align*}
\end{proof}

\begin{theorem}
Let $V$ be the cost function of a discrete time control problem that is stabilizable on the compact state space $\Omega$, where the dynamical system $f$ is in $C^1(\Omega \times U ,\Omega)$, the cost function $c$ is in $C^2(\Omega \times U ,\R)$, and that has a feedback whose closed-loop system has 0 as an asymptotically stable fixed point.
Then $V$ is Lipschitz continuous. 
\end{theorem}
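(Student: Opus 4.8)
The plan is to prove the theorem by concatenating a \emph{local} Lipschitz estimate for $V$ near the asymptotically stable equilibrium $0$ with a \emph{global} argument that propagates this estimate to all of $\Omega$, using a uniform bound on the number of time steps needed to steer an arbitrary point into that neighborhood. Since $\Omega\times U$ is compact and $f\in C^1$, $c\in C^2$, I first record that $f$ is Lipschitz in $x$ with some constant $L_f$ and $c$ is Lipschitz in $x$ and $u$ (constants $L_c$, $L_u$); these are the only quantitative inputs needed.

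For the local part, I would linearize $f$ and $c$ at $(0,0)$, pass to the associated LQR problem $\bar f(x,u)=Ax+Bu$, $\bar c(x,u)=x^TQx+u^TRu$ with its linear optimal feedback $\bar F$, and exploit that $\rho(M(0,0))<1$ for $M(x,u)=f_x(x,u)+f_u(x,u)\bar F$. This yields an adapted norm $\|\cdot\|_a$ in which $f(\cdot,\bar F\,\cdot)$ contracts on a small sublevel set $\Omega_2$, so that along $\bar F$-trajectories the mutual distance of two nearby initial points decays geometrically with factor $1-\eps_a$; summing the cost differences shows the (non-optimal) value function $\bar V$ of $\bar F$ is Lipschitz on $\Omega_2$, hence $V\le\bar V$ is controlled there. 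To pass from $\bar V$ to $V$, I would shrink to $\Omega_3\subset\Omega_2$ on which the genuine optimal controls already lie in $U_2$ (cf.\ Lemma~\ref{OptCtrU}, using $c>0$ for $u\neq0$ and compactness of $U$), then to $\Omega_4\subset\Omega_3$ from which optimal trajectories stay in $\Omega_3$, and compare the optimal trajectory of $x_0$ with the \emph{linearly corrected} trajectory of a nearby $\tilde x_0$ driven by $\tilde u_k=u_k+\bar F(\tilde x_k-x_k)$. The pseudo-contraction Lemma~\ref{contraction} keeps $(\tilde x_k)$ inside $\Omega_2$ as long as $\|x_0-\tilde x_0\|$ is below the Hausdorff gap $d(\Omega_2,\Omega_3)$, and one obtains $|V(x_0)-V(\tilde x_0)|\le L_{\text{loc}}\|x_0-\tilde x_0\|$ on some ball $U_{2\eps_1}(0)\subset\Omega_4$.

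For the global part, I would first check that $V$ is bounded on $\Omega$: a stabilizing control steers $x_0$ into $U_{\eps_1}(0)$ in finitely many steps, by continuity a whole neighborhood $N(x_0)$ is steered into $U_{2\eps_1}(0)$ in the same number of steps where $V\le\bar V$ is already bounded, and $c$ is bounded, so $V$ is bounded on $N(x_0)$; compactness of $\Omega$ then gives $\Delta:=\sup_\Omega V<\infty$. Assuming $c\ge\delta_3>0$ off $U_{\eps_1}(0)$, every optimal trajectory reaches $U_{\eps_1}(0)$ within any fixed $k_0\ge\Delta/\delta_3$ steps. Then for $\tilde x_0$ with $d:=\|x_0-\tilde x_0\|<\eps_0:=\eps_1/L_f^{k_0}$, I would run a nearly optimal control of $x_0$ for $k_0$ steps and afterwards switch the $\tilde x$-trajectory to the optimal feedback $F$; since $f$ is $L_f$-Lipschitz in $x$, the trajectories spread by at most $\|x_{k_0}-\tilde x_{k_0}\|\le L_f^{k_0}d\le\eps_1$, so $\tilde x_{k_0}\in U_{2\eps_1}(0)$ and the local bound applies to the tail. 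Summing the first $k_0$ cost differences (each controlled by $L_c d\,L_f^{j}$) and the terminal term $|V(x_{k_0})-V(\tilde x_{k_0})|\le L_{\text{loc}}L_f^{k_0}d$ gives
\[
|V(x_0)-V(\tilde x_0)|\le\Big(L_c\sum_{j=0}^{k_0-1}L_f^{j}+L_{\text{loc}}L_f^{k_0}\Big)d=:L_1 d;
\]
a covering argument over compact $\Omega$ makes $L_1$ uniform, and since $\diam(\Omega)<\infty$ this local-with-uniform-constant estimate upgrades to a global Lipschitz bound.

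I expect the main obstacle to be the bookkeeping in the local step: the nested neighborhoods $\Omega_1\supset\Omega_2\supset\Omega_3\supset\Omega_4$ and $U_1\supset U_2$ must be chosen so that simultaneously (i) $\|M(x,u)\|_a\le1-\eps_a$ on $\Omega_1\times U_1$, (ii) the linear correction $u_k+\bar F(\tilde x_k-x_k)$ remains admissible and keeps the corrected trajectory in $\Omega_2$, and (iii) optimal controls on $\Omega_3$ genuinely lie in $U_2$ while optimal trajectories from $\Omega_4$ stay in $\Omega_3$. Once this tower of neighborhoods is in place, both the local estimate and its global propagation through finitely many time steps are essentially routine.
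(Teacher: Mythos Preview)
Your proposal is correct and follows the paper's own proof essentially line by line: the same LQR linearization with the matrix $M(x,u)=f_x+f_u\bar F$ and adapted norm, the same tower of nested neighborhoods $\Omega_1\supset\Omega_2\supset\Omega_3\supset\Omega_4$ together with the linear correction $\tilde u_k=u_k+\bar F(\tilde x_k-x_k)$ for the local estimate, and the same uniform step bound $k_0\ge\Delta/\delta_3$ with $\eps_0=\eps_1/L_f^{k_0}$ for the global extension. The only cosmetic differences are that the paper carries an explicit factor~$2$ (from the ``nearly optimal'' slack) in the final constant and observes directly that $L_1$ is already independent of $x_0$, so no separate covering argument is needed.
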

\begin{proof}
From the setting in the proof of the preceding lemma and by choosing $(u_k)$ appropriately, we get 
\[ 
V(\tilde x_0) \leq J(\tilde x_0,(\tilde u_k)) + L_1 \| x_0 - \tilde x_0\| \leq J(x_0,(u_k)) + 2 L_1 \| x_0 - \tilde x_0\| = V(x_0) + 2 L_1 \| x_0 - \tilde x_0\|.
\]
Changing the roles of $x_0$ and $\tilde x_0$, we conclude 
\[
|V(x_0) - V(\tilde x_0)| \leq  2 L_1 \| x_0 - \tilde x_0\|.
\]
Now, of course, we can skip the assumption $\|x_0 -\tilde x_0\| \leq  \varepsilon_0$, because a local everywhere Lipschitz constant is also a global Lipschitz constant.
\end{proof}

By the definition of $v$ as $v(\cdot)=\exp(-V(\cdot))$, the function $v$ is also Lipschitz continuous, say with Lipschitz constant $L_v$.
In general, $V$ and $v$ can not be expected to be differentiable.

\end{appendix}

\bigskip

\bibliography{references}
\bibliographystyle{abbrv}

\end{document}